\renewcommand{\phi}{\varphi}
\newcommand{\dcup}{\ensuremath{\mathaccent\cdot\cup}}
\newcommand{\FF}{{\mathcal F}}
\newcommand{\NN}{\mathbb{N}}
\newcommand{\RR}{\mathbb{R}}
\newcommand{\CC}{\mathcal{C}}
\newcommand{\ZZ}{\mathbb{Z}}
\newcommand{\kommentar}[1]{}
\newcommand{\charFunc}{\mathbb{1}}
\renewcommand{\epsilon}{\varepsilon}
\newcommand{\polygon}[2]{%
  let \n{len} = {2*#2*tan(360/(2*#1))} in
 ++(0,-#2) ++(\n{len}/2,0) \foreach \x in {1,...,#1} { -- ++(\x*360/#1:\n{len})}}
\definecolor{light-gray}{gray}{0.60}
\newtheorem{theorem}{Theorem}[section]
\newtheorem{lemma}[theorem]{Lemma}
\theoremstyle{definition}
\newtheorem{defi}[theorem]{Definition}
\newtheorem{corollary}[theorem]{Corollary}
\theoremstyle{remark}
\newtheorem{remark}[theorem]{Remark}
\newcommand{\be}{\begin{equation}}
\newcommand{\ben}{\end{equation}}
\numberwithin{equation}{section}
\newcommand{\hide}[1]{}
\begin{document}

\title{Wavelet Riesz Bases Associated to Nonisotropic Dilations}

\author{Hartmut F\"{u}hr}
\address{Lehrstuhl A f\"ur Mathematik, RWTH Aachen, D-52056 Aachen, Germany}
\email{fuehr@matha.rwth-aachen.de}

\author{Yannic Maus$^*$}
\address{Lehrstuhl f\"ur Algorithmen \& Komplexit\"at, Universit\"at Freiburg, D-79110 Freiburg, Germany }
\email{yannic.maus@cs.uni-freiburg.de}


\subjclass[2000]{42C40,52C20}

\date{\today}


\keywords{wavelet set; wavelet Riesz basis; shearlet Riesz basis; $k$-tiling}
\begin{abstract}
A bounded, Riemann integrable and  measurable set $K\subset \RR^d$, which fulfills
\[\sum\limits_{\gamma\in\Gamma}\charFunc_K(x-\gamma)=k\text{ almost everywhere, $x\in\RR^d$}\] 
for a lattice $\Gamma\subset\RR^d$ is called $k$-tiling.
If $K\subset\RR^d$  is $k$-tiling $L^2(K)$ will admit a Riesz basis of exponentials. We use this result to construct generalized Riesz wavelet bases of $L^2(\RR^2)$, arising from the action of suitable subsets of the affine group. One example of our construction is the first known shearlet Riesz basis. 
\end{abstract}

\maketitle

\section{Introduction \& Preliminaries}
The purpose of this note is the construction of certain Riesz bases of ${\rm L}^2(\mathbb{R}^d)$ arising as dilations and translations of a single mother wavelet $\psi$. Here we consider the case where $\psi$ arises from a {\bf wavelet set}, i.e., up to normalization, the Fourier transform of $\psi$ is the characteristic function of a Borel set \cite{FaWa,SPELARDAI97,Wa}. The case of orthonormal wavelet sets is fairly well understood. They can be characterized by a joint tiling property; see Theorem \ref{thm:main_old} below, and the subsequent remark. Our main general result, which yields sufficient conditions on a set of generators to form a wavelet Riesz basis, can best be appreciated by contrasting it to the criterion from Theorem \ref{thm:main_old}. For a more concrete discussion, let us now introduce some notation.

Given a Hilbert space $\mathcal{H}$, a {\bf Riesz basis} of $\mathcal{H}$ is a family of vectors $(g_i)_{i \in I}$ in $\mathcal{H}$ spanning a dense subspace, and fulfilling the inequalities 
\[
 C_1 \sum_{i \in I} |c_i|^2 \le \left\| \sum_{i \in I} c_i g_i \right\|^2 \le C_2  \sum_{i \in I} |c_i|^2
\] for suitable constants $0< C_1 \le C_2$, and all finitely supported families $(c_i)_{i \in I}$ of complex coefficients. $C_1$ and $C_2$ are called {\bf lower- and upper Riesz basis bounds}. Given such a Riesz basis, there exists a unique {\bf dual Riesz basis} $(\widetilde{g}_i)_{i \in I}$ fulfilling $\langle g_i, \widetilde{g}_j \rangle = \delta_{i,j}$. As a consequence, every $f \in \mathcal{H}$ has the unconditionally converging expansions
\[
f = \sum_{i \in I} \langle f, g_i \rangle \widetilde{g}_i = \sum_{i \in I} \langle f, \widetilde{g}_i \rangle g_i, 
\] each with unique coefficients. Finally, given any coefficient sequence $(c_i)_{i \in I} \in \ell^2(I)$,  the sum
\begin{equation} \label{eqn:interpol}
f = \sum_{i \in I} c_i \widetilde{g}_i 
\end{equation}  converges unconditionally in the norm, and yields an $f \in \mathcal{H}$ with $\langle f, g_i \rangle = c_i$. 
For these and other facts concerning Riesz bases, we refer to \cite{CHR02}. 

We are interested in Riesz bases obtained by translating and dilating a suitably chosen function $\psi$, the wavelet. 
For $\lambda\in\RR^d$ and $A\in Gl(d,\RR)$ the translation, modulation and dilation operator are defined as follows
\begin{align}
T_{\lambda}:& L^2(\RR^d)\rightarrow L^2(\RR^d), & (T_{\lambda}f)(x)&=f(x-\lambda),\\
E_{\lambda}: &L^2(\RR^d)\rightarrow L^2(\RR^d), & (E_{\lambda}f)(x)&=e^{2\pi i \langle\lambda, x\rangle}f(x),\\
D_A:& L^2(\RR^d)\rightarrow L^2(\RR^d), & (D_Af)(x) &= |\det(A)|^{-1/2}f(A^{-1}x).
\end{align}
A {\bf wavelet orthonormal basis} is an orthonormal basis of $L^2(\mathbb{R}^d)$ of the type
\begin{equation}
\label{def:ONBwavelet}
 \left( D_A T_\lambda \psi \right)_{A \in \mathcal{D}, \lambda \in \Lambda},
\end{equation} with suitable choices of dilation matrices $\mathcal{D} \subset Gl(\mathbb{R}^d)$, translations $\Lambda \subset \mathbb{R}^d$, and of the mother wavelet $\psi \in L^2(\mathbb{R}^d)$. Any such $\psi$ is called a {\boldmath$(\Lambda,\mathcal{D})$}\textbf{-wavelet}. If the system is a Riesz basis, $\psi$ will be called a  {\boldmath$(\Lambda,\mathcal{D})$}\textbf{-Riesz-wavelet}; any $(\Lambda,\mathcal{D})$-wavelet is a $(\Lambda,\mathcal{D})$-Riesz-wavelet.

Following \cite{FaWa,DAI98,SPELARDAI97,Wa}, we call a Borel set $W \subset \mathbb{R}^d$ a {\bf {\boldmath$(\Lambda,\mathcal{D})$}-(Riesz-)wavelet set} if $\psi$ with $\widehat{\psi} = c_\psi \mathbb{1}_W$ is a $(\Lambda,\mathcal{D})$-(Riesz-)wavelet. 


The essential notion for understanding the wavelet set property is that of a {\em tiling set} w.r.t. a (full-rank) lattice. 
A full rank lattice is any set $\Gamma\subset \RR^d$ which can be written as $L\ZZ^d\subset \RR^d$, where $L\in Gl(d,\RR)$. 
Given a set $W \subset \mathbb{R}^d$ and $A\in Gl(d,\RR)$, we denote $AW=\{Ax\mid x\in W\}$ and say that $W$ is 
\begin{itemize}
 \item {\em translationally tiling} w.r.t. the lattice $\Gamma \subset \mathbb{R}^d$,  if 
 \begin{equation}
  \sum_{\gamma \in \Gamma} \mathbb{1}_{\gamma +W} (\xi) = 1,\label{def:kTiling}
	\end{equation}
  for almost every $\xi \in \mathbb{R}^d$. 
 \item  {\em multiplicatively tiling} w.r.t the (usually countable) set $\mathcal{D} \subset Gl(d,\mathbb{R})$, if 
 \[
  \sum_{A \in \mathcal{D}} \mathbb{1}_{AW} (\xi) = 1, 
 \] for almost every $\xi \in \mathbb{R}^d$. 
\end{itemize}


Before we formulate the fundamental result concerning wavelet sets and simultaneous tilings, we need to introduce the notion of a dual lattice: Given a lattice $\Gamma=L\ZZ^d \subset \mathbb{R}^d, L\in Gl(d,\RR)$, its dual is given by \mbox{$\Gamma^\bot = \{ \xi \in \mathbb{R}^d: \forall \gamma\in\Gamma~ \langle \xi, \gamma \rangle \in \mathbb{Z} \}$}. $\Gamma^{\bot}$ is equal to $L^{-T}\ZZ^d$ and is a lattice as well.
Furthermore, we let $A^{-T}$ denote the transpose inverse of the invertible matrix $A$, and we let \mbox{$\mathcal{D}^{-T} = \{ A^{-T} : A \in \mathcal{D} \}$}, for any subset $\mathcal{D} \subset Gl(d,\mathbb{R})$.

The following theorem is a special case of  \cite[Theorem 1.1]{Wa}. 
\begin{theorem}[\cite{Wa}] \label{thm:main_old}
 Let $\mathcal{D} \subset Gl(d,\mathbb{R})$, and $\Gamma \subset \mathbb{R}^d$ denote a lattice. $W \subset \mathbb{R}^d$ is a $(\Gamma^\bot,\mathcal{D})$ wavelet set iff it is translationally tiling w.r.t. $\Gamma$ as well as multiplicatively tiling w.r.t. $\mathcal{D}^{-T}$.
\end{theorem}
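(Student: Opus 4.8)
The plan is to transfer the whole question to the Fourier side, where the Plancherel theorem turns the orthonormal-basis property into a statement about modulated indicator functions and the two tiling hypotheses decouple. Writing $\widehat\psi = c_\psi\mathbb{1}_W$ and using $\widehat{T_\lambda f}=E_{-\lambda}\widehat f$ together with $\widehat{D_A f}=D_{A^{-T}}\widehat f$, the (unitary) Fourier transform carries the system $(D_A T_\lambda\psi)_{A\in\mathcal{D},\,\lambda\in\Gamma^\bot}$ onto
\[
 g_{B,\mu}:=D_B E_\mu\bigl(c_\psi\mathbb{1}_W\bigr)=c_\psi|\det B|^{-1/2}e^{2\pi i\langle B^{-T}\mu,\,\cdot\rangle}\mathbb{1}_{BW},\qquad B\in\mathcal{D}^{-T},\ \mu\in\Gamma^\bot,
\]
after substituting $B=A^{-T}$ and $\mu=-\lambda$ (using $\Gamma^\bot=-\Gamma^\bot$), both substitutions being bijective reindexings. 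Thus $W$ is a $(\Gamma^\bot,\mathcal{D})$ wavelet set iff $(g_{B,\mu})_{B,\mu}$ is an orthonormal basis of $L^2(\RR^d)$. The crucial structural facts are that $g_{B,\mu}$ is supported in $BW$, so the system splits into slices indexed by $B$, and that $D_B$ is a unitary bijection of $L^2(W)$ onto $L^2(BW)$ carrying $c_\psi E_\mu\mathbb{1}_W$ to $g_{B,\mu}$; hence every per-slice question reduces to the single exponential system $(c_\psi e^{2\pi i\langle\mu,\,\cdot\rangle}\mathbb{1}_W)_{\mu\in\Gamma^\bot}$ on $L^2(W)$.

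The technical core is the classical lattice case of Fuglede's observation, which I would isolate as a lemma: with $c_\psi=|W|^{-1/2}$, the exponentials $(|W|^{-1/2}e^{2\pi i\langle\mu,\,\cdot\rangle}\mathbb{1}_W)_{\mu\in\Gamma^\bot}$ form an orthonormal basis of $L^2(W)$ iff $W$ is translationally tiling w.r.t.\ $\Gamma$. This is proved by analyzing the $\Gamma$-periodization $\Phi_W=\sum_{\gamma\in\Gamma}\mathbb{1}_W(\cdot-\gamma)$: the off-diagonal inner products are the Fourier coefficients of $\Phi_W$ on the torus $\RR^d/\Gamma$, so orthogonality is equivalent to $\Phi_W$ being a.e.\ constant, and Parseval on the torus upgrades "constant" to "$\equiv 1$" precisely when the system is in addition complete. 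Granting the lemma, the forward direction is immediate: if $W$ multiplicatively tiles w.r.t.\ $\mathcal{D}^{-T}$ the sets $BW$ are essentially disjoint and cover $\RR^d$, so $L^2(\RR^d)=\bigoplus_{B\in\mathcal{D}^{-T}}L^2(BW)$; if $W$ also translationally tiles w.r.t.\ $\Gamma$, the lemma transported by the unitary $D_B$ makes $(g_{B,\mu})_\mu$ an orthonormal basis of each summand, whence the union is an orthonormal basis of the whole space, and pulling back through the inverse Fourier transform yields the wavelet basis.

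For the converse I would recover the two tilings from orthonormality and completeness separately. The decisive point is that the constant elements $g_{B,0}=c_\psi|\det B|^{-1/2}\mathbb{1}_{BW}$ must be mutually orthogonal, while $\langle g_{B,0},g_{B',0}\rangle$ is a positive multiple of $|BW\cap B'W|$; hence $BW$ and $B'W$ are essentially disjoint for $B\neq B'$. Completeness forces $\bigcup_B BW$ to have full measure, so the disjoint $BW$ cover $\RR^d$ up to a null set, which is exactly multiplicative tiling w.r.t.\ $\mathcal{D}^{-T}$ and gives $L^2(\RR^d)=\bigoplus_B L^2(BW)$ as an orthogonal direct sum. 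Since the slices are now genuine orthogonal summands, completeness of the full system forces completeness of $(g_{B,\mu})_\mu$ within each $L^2(BW)$; transporting back by $D_B$ and invoking the lemma delivers translational tiling w.r.t.\ $\Gamma$.

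The main obstacle is the converse, specifically disentangling the two tiling conditions. A priori an orthonormal basis could arise from overlapping slices (a $k$-fold multiplicative tiling) compensated by exponential systems that are only orthonormal, not complete, inside each slice; dimension/density bookkeeping alone does not exclude this degeneracy, since the leftover spaces $L^2(BW)\ominus\overline{\spanself}(g_{B,\mu})_\mu$ could conceivably fit together. The orthogonality of the constant ("DC") components is what breaks the symmetry, forcing genuine single multiplicative tiling first, after which the per-slice exponential lemma applies and yields single translational tiling. A secondary but necessary bookkeeping point is to track the normalizing constant $c_\psi=|W|^{-1/2}=|\det L|^{-1/2}$ consistently through both reductions.
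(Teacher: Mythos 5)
Your proposal is correct, but note that the paper itself contains no proof of Theorem \ref{thm:main_old}: it is quoted as a special case of \cite[Theorem 1.1]{Wa}, so the comparison is necessarily with the paper's machinery for the Riesz analogue, Theorem \ref{thm:main}. Your sufficiency half is precisely the orthonormal specialization of that machinery: Fourier conjugation turning $(D_A T_\lambda\psi)$ into $(D_{A^{-T}}E_\mu \widehat\psi)$, the orthogonal decomposition $L^2(\mathbb{R}^d)=\bigoplus_B L^2(BW)$ coming from the multiplicative tiling (this is Lemma \ref{dilationBasis} with ``Riesz basis'' replaced by ``orthonormal basis''), and a basis of exponentials on $L^2(W)$ --- except that where the paper invokes Theorem \ref{GrepstadTheorem} (Grepstad--Lev/Kolountzakis, quasicrystal spectrum, $k$-tiling allowed), you invoke the classical lattice case of Fuglede (lattice spectrum forces $1$-tiling). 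This substitution is exactly what makes the equivalence, rather than a one-way implication, possible. Your necessity half, which the paper's Riesz-side argument cannot supply at all, is the standard argument from the wavelet-set literature (Dai--Larson--Speegle, Wang): orthogonality of the zero-frequency elements $g_{B,0}$ forces essential disjointness of the sets $BW$, completeness forces covering, and then per-slice completeness plus your exponential lemma yields the translational tiling; your identification of the ``DC-orthogonality'' step as the point that disentangles the two tilings is exactly right and correctly rules out the $k$-fold-overlap degeneracy.

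One step you should tighten: in the lemma, the claim that ``Parseval on the torus upgrades `constant' to `$\equiv 1$''' needs a well-chosen test function. Testing Parseval with $f=\mathbb{1}_W$ gives back only the norm condition $c_\psi^2|W|=1$ and cannot distinguish $k=1$ from $k\geq 2$. The correct argument is to note that if $\Phi_W=k\geq 2$ a.e., the $\Gamma$-periodization map $P:L^2(W)\to L^2(\mathbb{R}^d/\Gamma)$ satisfies $\|Pf\|^2=\|f\|^2$ for $f$ supported on a single measurable ``sheet'' of the $k$-fold cover but $\|Pf\|^2=k\|f\|^2$ for $f$ constant across sheets, so the Parseval identity $c_\psi^2|\det L|\,\|Pf\|^2=\|f\|^2$ cannot hold for all $f$ unless $k=1$ (equivalently: $P$ has nontrivial kernel for $k\geq 2$, obtained by signing two sheets oppositely, and any kernel element is orthogonal to all the exponentials). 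This requires a routine measurable-selection construction of the sheets, which your sketch asserts implicitly; it is standard, so I classify this as terseness rather than a gap, but a full write-up must include it.
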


Below, in Theorem \ref{thm:main}, we will formulate and prove an extension of the sufficiency part of Theorem \ref{thm:main_old}. Essentially, we will trade in the ONB requirement in the definition of a wavelet set for the less restrictive Riesz basis property, and in turn obtain less prohibitive restrictions on $W$.
 The precise formulation uses a refinement of the tiling property:
\begin{itemize}
 \item A set $W \subset\mathbb{R}^d$ is {\em translationally $k$-tiling} w.r.t. the lattice $\Gamma \subset \mathbb{R}^d$,  if 
 \[
  \sum_{\gamma \in \Gamma} \mathbb{1}_{\gamma + W} (\xi) = k,
 \] for almost every $\xi \in \mathbb{R}^d$.
 \end{itemize}
 We then have:
\begin{theorem} \label{thm:main}
  Let $\mathcal{D} \subset Gl(d,\mathbb{R})$, and $\Gamma \subset \mathbb{R}^d$ denote a lattice. If $W \subset \mathbb{R}^d$ is translationally $k$-tiling w.r.t. $\Gamma$ as well as multiplicatively tiling w.r.t. $\mathcal{D}^{-T}$, then there exists $\Lambda \subset \mathbb{R}^d$ such that $W$ is a $(\Lambda,\mathcal{D})$-Riesz-wavelet set.
\end{theorem}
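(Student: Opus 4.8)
The plan is to pass to the Fourier side and let the two tiling hypotheses act in complementary ways: the multiplicative tiling will split $L^2(\RR^d)$ into orthogonal blocks on which the wavelet system acts separately, while the translational $k$-tiling will furnish, on each block, a Riesz basis of exponentials. Since the Fourier transform is unitary, with $\widehat{D_A f} = D_{A^{-T}}\widehat f$ and $\widehat{T_\lambda f} = E_{-\lambda}\widehat f$, it is enough to exhibit $\Lambda$ for which $(\widehat{D_A T_\lambda \psi})_{A\in\mathcal{D},\lambda\in\Lambda}$ is a Riesz basis of $L^2(\RR^d)$, where $\widehat\psi = c_\psi\mathbb{1}_W$.

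First I would compute the transformed generators. A direct calculation gives $\widehat{D_A T_\lambda\psi}(\xi) = c_\psi|\det A|^{1/2} e^{-2\pi i\langle A\lambda,\xi\rangle}\mathbb{1}_{A^{-T}W}(\xi)$, so each is, up to a constant, a single exponential restricted to $A^{-T}W$. The multiplicative tiling of $W$ w.r.t. $\mathcal{D}^{-T}$ is exactly the statement $\sum_{A\in\mathcal{D}}\mathbb{1}_{A^{-T}W}=1$ a.e., i.e.\ the sets $A^{-T}W$ partition $\RR^d$ up to a null set. This yields the orthogonal decomposition $L^2(\RR^d) = \bigoplus_{A\in\mathcal{D}} L^2(A^{-T}W)$, which the wavelet system respects: for fixed $A$ the functions $\{\widehat{D_A T_\lambda\psi}:\lambda\in\Lambda\}$ lie in $L^2(A^{-T}W)$, and blocks for distinct $A$ are mutually orthogonal.

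The key simplification is to normalize every block by a change of variables. For each $A$ the map $U_A g(\eta) = |\det A|^{-1/2} g(A^{-T}\eta)$ is a unitary $L^2(A^{-T}W)\to L^2(W)$, and using $\langle A\lambda, A^{-T}\eta\rangle = \langle\lambda,\eta\rangle$ one checks that $U_A$ carries $(\widehat{D_A T_\lambda\psi})_{\lambda\in\Lambda}$ onto the \emph{$A$-independent} family $(c_\psi e^{-2\pi i\langle\lambda,\cdot\rangle}\mathbb{1}_W)_{\lambda\in\Lambda}$ in $L^2(W)$ (the $|\det A|^{\pm 1/2}$ factors cancel). Hence every block is unitarily equivalent to one and the same exponential system on $L^2(W)$. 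It follows that the full system is a Riesz basis of $\bigoplus_A L^2(A^{-T}W)=L^2(\RR^d)$ if and only if $(e^{2\pi i\langle\lambda,\cdot\rangle})_{\lambda\in\Lambda}$ is a Riesz basis of $L^2(W)$, and the lower and upper bounds of the whole system equal those of the single exponential basis. Thus uniform control of the Riesz bounds across the possibly infinitely many blocks, which would otherwise be the natural worry, comes for free, and completeness likewise transfers blockwise.

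It then remains to choose $\Lambda$ making $(e^{2\pi i\langle\lambda,\cdot\rangle})_{\lambda\in\Lambda}$ a Riesz basis of $L^2(W)$, and this is precisely where the $k$-tiling hypothesis enters and where the genuine difficulty lies. Here I would invoke the exponential Riesz basis theorem for multi-tiles: a bounded, Riemann measurable set that $k$-tiles $\RR^d$ under $\Gamma$ admits a Riesz basis of exponentials for $L^2(W)$, with frequency set $\Lambda$ a union of $k$ translates of the dual lattice $\Gamma^\bot$. Feeding this $\Lambda$ back through the reduction produces the desired $(\Lambda,\mathcal{D})$-Riesz-wavelet set. In short, the wavelet-theoretic part is the clean unitary reduction, with the only delicate point (uniform bounds) dissolved by the $A$-independence; the real obstacle is the construction of the exponential Riesz basis from the multi-tiling property, which is the main technical input.
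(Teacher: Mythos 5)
Your proposal is correct and follows essentially the same route as the paper: pass to the Fourier side, use the multiplicative tiling to decompose $L^2(\RR^d)=\bigoplus_{A\in\mathcal{D}}L^2(A^{-T}W)$, transfer a single exponential Riesz basis of $L^2(W)$ to each block by a unitary dilation (your $U_A$ is just the inverse of the paper's $D_{A^{-T}}$ in Lemma~\ref{dilationBasis}, so the uniform-bounds observation is the same), and invoke the Grepstad--Lev/Kolountzakis theorem for the $k$-tiling part. The only cosmetic discrepancies are the sign in the exponent (fixed by replacing $\Lambda$ with $-\Lambda$) and your description of $\Lambda$ as $k$ translates of $\Gamma^\bot$, which matches the Kolountzakis variant cited by the paper, whereas the Grepstad--Lev construction yields a quasicrystal.
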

Note that in contrast to Theorem \ref{thm:main_old} the set $\Lambda$ in Theorem \ref{thm:main} is not a lattice; its proof reveals that it is a so called quasicrystal (consult \cite{MatMey10} for the definition of a quasicrystal). 

The purpose of this paper is twofold: In Section \ref{sec:proofThm}, we will present a proof of \mbox{Theorem \ref{thm:main}}, using a recent result on Riesz bases of exponentials \cite{GrepstadLev12,Kol_15} that relies on the sampling results for quasicrystals by Matei and Meyer, \cite{GrepstadLev12, MatMey10}. We will then present several applications of Theorem \ref{thm:main}, and construct Riesz bases in settings in which, until now, only frames were known to exist. In particular, we will exhibit a shearlet Riesz basis. 

\section{Proof of Theorem \ref{thm:main}}
\label{sec:proofThm}
For a countable $\Lambda\subset\RR^d$ we define the sequence of corresponding exponential functions by
\[E(\Lambda)=\left\{\left. x\mapsto e^{2\pi i \langle\lambda, x\rangle}\right| \lambda\in \Lambda\right\}.\] Let $\FF:L^2(\RR^d)\rightarrow L^2(\RR^d),~f\mapsto \widehat{f}$
 denote the Fourier transform which is defined pointwise by $\widehat{f}(\zeta)=\int_{\RR^d}f(x)e^{-2\pi i \langle\zeta, x\rangle}dx$ for \mbox{$f\in L^1(\RR^d)\cap L^2(\RR^d)$}. 
By a Riemann integrable set we mean a measurable set whose boundary has measure zero.

The following reasoning is a fairly straightforward adaptation of the argument developed for orthonormal wavelet sets, see e.g. \cite[page 40]{DAI98}: We will repeatedly use the simple observation that the unitary image of a Riesz basis is a Riesz basis as well, with the same bounds. Hence, using the unitarity of the Fourier transform, and the well-known relations between translation, modulation and dilation operators in connection with the Fourier transform, one finds that the wavelet system 
$\left( D_{A} T_\lambda \psi \right)_{A \in \mathcal{D}, \lambda \in \Lambda}$ is a Riesz basis of $L^2(\mathbb{R}^d)$ iff its Fourier transform image, given by 
\begin{equation}\label{eqn:FourierImage}
 \left( D_{A^{-T}} E_\lambda \widehat{\psi} \right)_{A \in \mathcal{D}, \lambda \in \Lambda} 
\end{equation} is a Riesz basis of $L^2(\RR^d)$. For $\widehat{\psi} = \mathbb{1}_W$, with a multiplicatively tiling set $W$, this problem can be reduced to that of constructing a suitable Riesz basis of exponentials on $L^2(W)$, by the following simple observation.
\begin{lemma}
\index{Dilation Lemma}
\label{dilationBasis}
Let $W\subset\RR^d$ be a measurable and bounded set, $A\in Gl(d,\RR)$ and $(f_i)_{i\in I}$ a Riesz basis of $L^2(W)$ with Riesz bounds $C_1, C_2>0$.
Then $(D_Af_i)_{i\in I}$ is a Riesz basis of $L^2(AW)$ with Riesz bounds $C_1$ and $C_2$.

If $W$ is multiplicatively tiling with respect to $\mathcal{D}'\subset Gl(d,\RR)$,  then $(D_Af_i)_{i\in I,A\in \mathcal{D}'}$ is a Riesz basis of $L^2(\RR^d)$.
\end{lemma}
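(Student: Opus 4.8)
The plan is to treat the two assertions separately, obtaining the first from the unitarity of $D_A$ and the second from the orthogonal decomposition of $L^2(\RR^d)$ induced by the multiplicative tiling. For the first claim, I would begin by recording that $D_A$ is a unitary operator on $L^2(\RR^d)$: the change of variables $x = Ay$ shows $\langle D_A f, D_A g\rangle = \langle f, g\rangle$, the normalization factor $|\det A|^{-1/2}$ being exactly what makes this hold. Next I would observe that $D_A$ carries functions supported on $W$ to functions supported on $AW$, since $(D_A f)(x) = |\det A|^{-1/2} f(A^{-1}x)$ vanishes unless $A^{-1}x \in W$, i.e. $x \in AW$; hence the restriction of $D_A$ is a unitary isomorphism $L^2(W) \to L^2(AW)$. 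Invoking the observation from the preceding paragraph of the paper, that a unitary image of a Riesz basis is again a Riesz basis with identical bounds, immediately yields that $(D_A f_i)_{i\in I}$ is a Riesz basis of $L^2(AW)$ with bounds $C_1, C_2$.

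For the second claim, the multiplicative tiling hypothesis $\sum_{A\in\mathcal{D}'}\mathbb{1}_{AW} = 1$ almost everywhere says precisely that the sets $AW$, $A\in\mathcal{D}'$, are pairwise disjoint up to null sets and cover $\RR^d$ up to a null set. This gives the orthogonal decomposition $L^2(\RR^d) = \bigoplus_{A\in\mathcal{D}'} L^2(AW)$, where each $L^2(AW)$ is viewed as the subspace of functions vanishing off $AW$. By the first claim, each summand carries the Riesz basis $(D_A f_i)_{i\in I}$ with the \emph{same} bounds $C_1, C_2$, crucially independent of $A$.

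It then remains to assemble these into a Riesz basis of the whole space. For a finitely supported coefficient array $(c_{i,A})$, the partial sums $\sum_i c_{i,A} D_A f_i$ lie in the mutually orthogonal subspaces $L^2(AW)$, so the Pythagorean identity gives $\|\sum_{A,i} c_{i,A} D_A f_i\|^2 = \sum_A \|\sum_i c_{i,A} D_A f_i\|^2$; applying the per-summand Riesz inequalities and summing over $A$ produces the global bounds $C_1 \sum_{A,i}|c_{i,A}|^2 \le \|\sum_{A,i} c_{i,A} D_A f_i\|^2 \le C_2 \sum_{A,i}|c_{i,A}|^2$. For density, any $f\in L^2(\RR^d)$ decomposes as $f = \sum_A f_A$ with $f_A\in L^2(AW)$; truncating to a finite subset of $\mathcal{D}'$ controls the tail $\sum_{A}\|f_A\|^2$ in norm, and approximating each of the finitely many remaining $f_A$ within the dense span of $(D_A f_i)_{i\in I}$ finishes the estimate.

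I expect the only point requiring genuine care to be this density argument in the countably infinite direct sum: one must first truncate the orthogonal expansion $f = \sum_A f_A$ to finitely many $A$, and only afterwards approximate inside each summand, so that the two error contributions can be controlled independently. The Riesz bound estimates themselves are a direct consequence of the orthogonality of the $L^2(AW)$ together with the uniformity of $C_1, C_2$ over $A$, and so present no real difficulty.
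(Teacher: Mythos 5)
Your proposal is correct and follows essentially the same route as the paper: unitarity of $D_A:L^2(W)\to L^2(AW)$ for the first claim, and for the second the orthogonal decomposition $L^2(\RR^d)=\bigoplus_{A\in\mathcal{D}'}L^2(AW)$ induced by the tiling, combined with the uniformity of the bounds $C_1,C_2$ across summands. The paper simply asserts as ``easily seen'' what you spell out---the Pythagorean combination of the per-summand Riesz inequalities and the truncate-then-approximate density argument---so your write-up is a correct, more detailed version of the paper's own proof.
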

\begin{proof}
Since $D_A: L^2(W) \to L^2(AW)$ is unitary, the first statement follows from the comment on unitary images of Riesz bases made above. 
For the proof of the second statement, first note that the tiling property of $W$ yields 
\[
 L^2(\mathbb{R}^d) = \bigoplus_{A \in \mathcal{D}'} L^2(AW).
\] Now, it is easily seen that a union of Riesz bases of $L^2(AW)$ with identical bounds $C_1,C_2$, for each $A \in \mathcal{D'}$, is a Riesz basis of the orthogonal sum, with the same Riesz basis bounds. 
%
%
%
\end{proof}

\hide{
\subsection{Quasicrystals}

We begin with the definition of quasicrystals, which were introduced in\cite{MatMey10}. For $(\overline{x},x)\in\RR^d\times \RR^e$ we define the (linear) projections $p_1(\overline{x},x)=\overline{x}\in\RR^d$ and \mbox{$p_2(\overline{x},x)=x\in\RR^e$}. 
Let $\Gamma\subset \RR^{d+e}$ be a full rank lattice \footnote{Definition of a (full rank) lattice} and $Q\subset \RR^e$ be a compact set.
If the restrictions $p_{1|\Gamma}$ and $p_{2|\Gamma}$ are injective and their images are dense in $\RR^d$ and $\RR^e$ respectively we define the quasicrystal $\Lambda(\Gamma,Q)$
\begin{align*}
\Lambda(\Gamma,Q) & =\left\{p_1(\gamma)\left| \gamma\in\Gamma, p_2(\gamma)\in Q\right.\right\} \text{ and}
\Lambda^*(\Gamma,K) & = \left\{p_2(\gamma^*)\left| \gamma^*\in\Gamma^*, p_1(\gamma^*)\in K\right.\right\}.
\end{align*}
$\Lambda^*(\Gamma,K)$ is called the dual quasicrystal. If $e=1$ and $Q\subset \RR$ is an interval we call any quasicrystal $\Lambda(\Gamma,Q)$ simple.
Note that there is no further restriction on $Q$. In particular there is no condition that relates $\Gamma$ and $Q$.

Quasicrystals are obtained by cut and project methods, e.g. $\Lambda(\Gamma,Q)$ equals the projection under $p_1$ of the \emph{cut} $\Gamma\cap (\RR^d\times Q)$. When we consider quasicrystals we always demand the underlying lattice to satisfy the conditions on the corresponding projections, that is $p_{1|\Gamma}$ and $p_{2|\Gamma}$ are injective and have dense images. For simplicity we do not explicitly add this requirement to a lemma when it is clear from the context.
}


With Lemma \ref{dilationBasis} and the preceeding comments, we are left with the ``local'' problem of constructing a Riesz basis of exponentials for $L^2(W)$ to complete the proof of Theorem \ref{thm:main}. This is solved by the following result due to Grepstad and Lev, recently somewhat generalized by Kolountzakis. Its proof relies on sampling results for quasicrystals by Matei and Meyer \cite{MatMey10}.
\begin{theorem}[\cite{GrepstadLev12,Kol_15}]
\label{GrepstadTheorem}
Let $W$ be a bounded, Riemann integrable set which is $k$-tiling with respect to the lattice $\Gamma$. Then there is a set $\Lambda\subset\RR^d$ such that $E(\Lambda)$ is a Riesz basis of exponentials of $L^2(W)$.
\end{theorem}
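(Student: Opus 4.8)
The plan is to follow the route of Kolountzakis \cite{Kol_15}, which reduces the construction of the spectrum $\Lambda$ to a finite, purely algebraic genericity problem; the quasicrystal sampling theory of Grepstad--Lev and Matei--Meyer \cite{GrepstadLev12,MatMey10} provides an alternative but considerably heavier route. Fix a measurable fundamental domain $D$ of $\Gamma$ and let $\pi\colon\RR^d\to\RR^d/\Gamma$ be the quotient map. Because $W$ is $k$-tiling, for a.e. $t\in D$ the fiber $S(t)=\{\gamma\in\Gamma : t+\gamma\in W\}$ has exactly $k$ elements. Since $W$ is bounded, only finitely many $\gamma\in\Gamma$ satisfy $(\gamma+D)\cap W\neq\emptyset$, so $t\mapsto S(t)$ takes only finitely many values $S_1,\dots,S_N$, and these partition $D$ into measurable pieces $D_r=\{t\in D: S(t)=S_r\}$. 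Riemann integrability of $W$ guarantees that the boundaries of these pieces are null, so the partition is well behaved. Folding $W$ onto $D$ along the fibers then yields a unitary identification $L^2(W)\cong L^2(D)^{\oplus k}$, the $k$ sheets over $t$ being indexed by $S(t)$.

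As candidate spectrum I would take a union of $k$ translates of the dual lattice,
\[
  \Lambda=\bigcup_{l=1}^{k}\bigl(\Gamma^\bot+v_l\bigr),
\]
with shift vectors $v_1,\dots,v_k\in\RR^d$ to be chosen. The first key step is to make the Riesz-basis condition for $E(\Lambda)$ explicit. Folding the integral defining $c_{l,\mu}=\int_W f(x)\,e^{-2\pi i\langle \mu+v_l,x\rangle}\,dx$ onto $D$ and using $\langle\mu,\gamma\rangle\in\ZZ$ for $\mu\in\Gamma^\bot,\ \gamma\in\Gamma$, one sees that the dual-lattice phase $e^{-2\pi i\langle \mu+v_l,t\rangle}$ factors out; Parseval's identity for the orthogonal exponential system $E(\Gamma^\bot)$ on $L^2(D)$ then gives
\[
  \sum_{l,\mu}\abs{c_{l,\mu}}^2 \;=\; C\int_D \bigl\| M(t)\,\vec F(t)\bigr\|^2\,dt ,
\]
where $\vec F(t)$ collects the $k$ sheet-values of $f$ over $t$, the constant $C>0$ depends only on $\abs{D}$, and $M(t)=\bigl(e^{2\pi i\langle v_l,\gamma\rangle}\bigr)_{1\le l\le k,\ \gamma\in S(t)}$ is a $k\times k$ matrix. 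Since $\|f\|_{L^2(W)}^2=\int_D\|\vec F(t)\|^2\,dt$, this exhibits $E(\Lambda)$ as a Riesz basis of $L^2(W)$ exactly when the singular values of $M(t)$ are bounded above and below by positive constants for a.e. $t$.

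Because $M(t)$ depends on $t$ only through $S(t)\in\{S_1,\dots,S_N\}$, it is piecewise constant with finitely many values $M_1,\dots,M_N$; the upper bound is then automatic (all entries have modulus $1$), and the required lower bound reduces to the single demand that each $M_r$ be invertible. This is the genuine obstacle and the heart of the argument: I must choose $v_1,\dots,v_k$ so that all $N$ generalized Vandermonde matrices $M_r=\bigl(e^{2\pi i\langle v_l,\gamma\rangle}\bigr)_{l,\ \gamma\in S_r}$ are simultaneously nonsingular. Here a single generic direction suffices: taking $v_l=(l-1)w$ for one vector $w\in\RR^d$ turns $M_r$ into the classical Vandermonde matrix $\bigl(z_\gamma^{\,l-1}\bigr)$ with nodes $z_\gamma=e^{2\pi i\langle w,\gamma\rangle}$, which is nonsingular iff the $z_\gamma$, $\gamma\in S_r$, are pairwise distinct, i.e. $\langle w,\gamma-\gamma'\rangle\notin\ZZ$ for all distinct $\gamma,\gamma'$ occurring in some $S_r$. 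Since these constraints exclude only a countable union of measure-zero hyperplanes, a generic $w$ satisfies them all at once, making every $M_r$ invertible. Fixing such $w$ forces uniform singular-value bounds and completes the construction of $\Lambda$.
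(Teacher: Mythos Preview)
The paper does not supply its own proof of this theorem; it is quoted as an external result from \cite{GrepstadLev12,Kol_15}, with the comment that ``its proof relies on sampling results for quasicrystals by Matei and Meyer \cite{MatMey10}''. Your argument is correct and is precisely Kolountzakis's proof \cite{Kol_15}: fold $W$ onto a fundamental domain $D$ of $\Gamma$, use boundedness of $W$ to reduce to finitely many fiber configurations $S_1,\dots,S_N$, take $\Lambda=\bigcup_{l=1}^k(\Gamma^\bot+(l-1)w)$, and reduce the Riesz basis property to the simultaneous invertibility of finitely many $k\times k$ Vandermonde matrices, which a generic $w$ achieves.

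Two remarks on how your route relates to the paper's discussion. First, Riemann integrability is not actually used in your argument: the pieces $D_r$ are measurable as finite Boolean combinations of translates of $W$ intersected with $D$, and that is all the folding needs; your sentence about boundaries being null is harmless but superfluous, and dropping it is exactly the improvement of \cite{Kol_15} over \cite{GrepstadLev12}. Second, the $\Lambda$ you obtain is a finite union of cosets of $\Gamma^\bot$, hence periodic. The paper's remark after Theorem~\ref{thm:main} that $\Lambda$ ``is a so called quasicrystal'' refers instead to the Grepstad--Lev construction via Matei--Meyer cut-and-project sets, which produces a genuinely aperiodic $\Lambda$. Your approach is more elementary and yields a structurally simpler sampling set; the quasicrystal route is heavier but was historically first and explains the paper's choice of language.
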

The proof of Theorem \ref{GrepstadTheorem}  does not use the structure of $W$ except for its measurability and integrability. Especially the constructed Riesz basis of exponentials only depends on the tiling lattice $\Gamma$ and the tiling constant $k$ but not on the set $W$ itself. If $W_1$ and $W_2$ are both $k$-tiling with the same lattice and the same $k$, Theorem \ref{GrepstadTheorem} will result in a sequence of exponentials such that its restriction to $W_i$ provides a Riesz basis of $L^2(W_i)$ for $i=1,2$.

\subsubsection*{Proof of Theorem \ref{thm:main}} Let $W$ be translationally $k$-tiling w.r.t. $\Gamma$ as well as multiplicatively tiling w.r.t. $\mathcal{D}^{-T}$. Then Theorem \ref{GrepstadTheorem} provides a Riesz basis of exponentials of $L^2(W)$. Lemma \ref{dilationBasis} applied with this Riesz basis and $\mathcal{D}' = \mathcal{D}^{-T}$ as well as the comments preceding the lemma conclude the proof of the theorem.

\section{Riesz basis discretization of continuous wavelet transforms}
In section \ref{sec:examples}, we apply the previous results to special instances of dilation sets $\mathcal{D}$, where the resulting Riesz bases can be understood as yielding a discretization of a suitable continuous wavelet transform. In this section we explain this discretization formally in \mbox{Theorem \ref{thm:cwt_basis}}, a special case of Theorem \ref{thm:main}. 

 We shortly introduce the relevant terminology: We consider a suitable closed subgroup $H \subset Gl(d,\mathbb{R})$ 
 and the associated semidirect product group $G=\mathbb{R}^d \rtimes H$, which is the affine group generated by $H$ and all translations $\RR^d$. Elements of this group are denoted by pairs $(x, A) \in \mathbb{R}^d \times H$. The associated continuous wavelet transform fixes a suitable $\psi \in L^2(\mathbb{R}^d)$ and maps each $f \in L^2(\mathbb{R}^d)$ onto the function
\[
 \mathcal{W}_\psi f: \mathbb{R}^d \rtimes H \to \mathbb{C}~, (x,A) \mapsto \langle f, T_x D_A \psi \rangle;\]
 note the different order of operators, in comparison to the definition of wavelet bases in (\ref{def:ONBwavelet}). 
Under suitable assumptions on $H$ and $\psi$, which will be fulfilled in both examples in Section \ref{sec:examples}, the continuous wavelet transform is isometric up to a constant, i.e, one has
\begin{equation} \label{eqn:cwt_isometric}
 \| f \|_2^2 = \frac{1}{c_\psi} \int_{H} \int_{\mathbb{R}^d} |\mathcal{W}_\psi f(x,A)|^2 dx \frac{dA}{|{\rm det}(A)|}.
\end{equation}
Here $dA$ denotes integration with respect to the left Haar measure on $H$.

For the rest of the paper, we consider the case where $H$ has  finitely many {\bf open, free dual orbits}.
I.e., we assume the existence of finitely many open subsets $\mathcal{O}_i \subset \mathbb{R}^d$ such that, given any choice of $\xi_i \in \mathcal{O}_i$, for $i=1,\ldots, m$, the induced map 
\[
 \Phi_i: H\rightarrow \mathcal{O}_i\subset \RR^d,  A \mapsto A^{-T} \xi_i
\] is a bijection onto $\mathcal{O}_i$. Here we assume that the $\mathcal{O}_i$ are different, and thus necessarily pairwise disjoint. This existence of open free orbits was recognized in \cite{BeTa} as being important for the construction of continuous wavelet transforms in higher dimensions (the freeness condition may be somewhat relaxed to compactness of the associated stabilizers \cite{Fu10}), and it is fulfilled by a large variety of matrix groups, see e.g. \cite{Mu,BeTa,Fu_abelian,DAHLKE10} for various examples of such groups. 

In order to construct wavelet Riesz bases in this setting, we furthermore assume the existence of a {\bf quasi-lattice} $\mathcal{D} \subset H$: I.e., $\mathcal{D}$ is a discrete subset of $H$ with the following property:
\begin{itemize}
\item There exists a relatively compact open set $\CC \subset H$ with boundary of measure zero such that 
\begin{equation}\label{def:quasiLattice}
 \sum_{A \in \mathcal{D}} \mathbb{1}_{A\CC} = 1
\end{equation} holds almost everywhere (with respect to the left Haar measure on $H$). We then call $\CC$ a {\bf complement} to $\mathcal{D}$.
\end{itemize}
The set $\mathcal{D}$ can be understood as an (essentially uniform) discretization of $H$.

With these definitions in place, we can now formulate an existence result for Riesz bases  which can be understood as a discretization of the  continuous wavelet transform; see Remark \ref{rem:sampling} for a more detailed explanation. 

\begin{theorem} \label{thm:cwt_basis}
Let $H$ be a matrix group with open free dual orbits 
 \[ \mathcal{O}_i = \{ A^{-T} \xi_i ~:~ A \in H \} \]
 for suitable $\xi_1,\ldots,\xi_m \in \mathbb{R}^d$ such that $\mathbb{R}^d \setminus \bigcup_{i=1}^m \mathcal{O}_i$ has measure zero, and let  $\mathcal{D} \subset H$ be a quasi-lattice with complement $\CC \subset H$. If there exists a lattice $\Gamma$ and a natural number $k\in\NN$ such that the associated set (also called \textbf{multiplicative tile})
 \[
  W = \bigcup_{i=1}^m\{ C^{-T} \xi_i : C \in \CC \}
 \] is translationally $k$-tiling with respect to $\Gamma$, then there exist $\Lambda \subset \mathbb{R}^d$ such that
 $W$ is a $(\mathcal{D},\Lambda)$-Riesz-wavelet set. 
\end{theorem}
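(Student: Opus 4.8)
The plan is to deduce the statement directly from Theorem \ref{thm:main} by verifying its two hypotheses for the set $W$. Translational $k$-tiling with respect to $\Gamma$ is assumed, so the whole argument reduces to establishing that $W$ is multiplicatively tiling with respect to $\mathcal{D}^{-T}$, i.e.\ that $\sum_{A \in \mathcal{D}} \mathbb{1}_{A^{-T}W} = 1$ almost everywhere; once this is in place, Theorem \ref{thm:main} supplies the desired translation set $\Lambda$ and we are done.

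First I would rewrite $A^{-T}W$ in terms of the orbit maps. Writing $\Phi_i(A) = A^{-T}\xi_i$ and using the identity $(AC)^{-T} = A^{-T}C^{-T}$, one computes $A^{-T}\{C^{-T}\xi_i : C \in \CC\} = \{\Phi_i(AC) : C \in \CC\} = \Phi_i(A\CC)$, so that $A^{-T}W = \bigcup_{i=1}^m \Phi_i(A\CC)$. Since the orbits $\mathcal{O}_i$ are pairwise disjoint and cover $\RR^d$ up to a null set, almost every $\xi$ lies in exactly one $\mathcal{O}_i$, and by freeness $\Phi_i$ is a bijection onto $\mathcal{O}_i$, so $\xi = \Phi_i(B)$ for a unique $B = B(\xi) \in H$. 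Because $\Phi_j(A\CC) \subset \mathcal{O}_j$ with the $\mathcal{O}_j$ disjoint, only the $i$-th term can contain $\xi$, and injectivity of $\Phi_i$ gives $\mathbb{1}_{A^{-T}W}(\xi) = \mathbb{1}_{A\CC}(B)$. Hence $\sum_{A \in \mathcal{D}} \mathbb{1}_{A^{-T}W}(\xi) = \sum_{A \in \mathcal{D}} \mathbb{1}_{A\CC}(B)$, reducing the multiplicative tiling of $W$ in $\RR^d$ to the quasi-lattice identity (\ref{def:quasiLattice}) on $H$.

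Next I would invoke the quasi-lattice property, which asserts $\sum_{A \in \mathcal{D}} \mathbb{1}_{A\CC} = 1$ almost everywhere with respect to the left Haar measure on $H$, and transport this identity from $H$ to $\mathcal{O}_i$ along $\Phi_i$. This transport is the only genuinely delicate point: I expect the measure-theoretic compatibility between the Haar measure on $H$ and Lebesgue measure on the dual orbits to be the crux of the proof. Since $\mathcal{O}_i$ is open in $\RR^d$ and in bijection with $H$, the group $H$ is a $d$-dimensional matrix group, and $\Phi_i$ is a smooth bijection whose differential is everywhere invertible (the orbit being open forces the orbit map to be a submersion onto a $d$-dimensional set), hence a diffeomorphism onto $\mathcal{O}_i$. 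Consequently the push-forward of Haar measure under $\Phi_i$ is mutually absolutely continuous with Lebesgue measure on $\mathcal{O}_i$, so $\Phi_i$ carries Haar-null sets to Lebesgue-null sets and back. The exceptional set on which $\sum_{A} \mathbb{1}_{A\CC} \neq 1$ therefore maps to a Lebesgue-null subset of $\mathcal{O}_i$, and for a.e.\ $\xi \in \mathcal{O}_i$ the sum equals $1$; summing over the finitely many orbits yields $\sum_{A \in \mathcal{D}} \mathbb{1}_{A^{-T}W} = 1$ a.e.\ on $\RR^d$.

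Finally I would record that $W$ meets the regularity requirements implicit in Theorem \ref{thm:main} (needed for the application of Theorem \ref{GrepstadTheorem}): the complement $\CC$ is relatively compact with boundary of measure zero, and since each $\Phi_i$ is a diffeomorphism, each $\Phi_i(\CC)$ is bounded with boundary of measure zero, so the finite union $W$ is bounded and Riemann integrable. With both tiling hypotheses and this regularity established, Theorem \ref{thm:main} produces a set $\Lambda \subset \RR^d$ such that $W$ is a Riesz-wavelet set for the dilation set $\mathcal{D}$ and translation set $\Lambda$ (a $(\Lambda,\mathcal{D})$-Riesz-wavelet set), completing the proof. The algebraic step $A^{-T}W = \bigcup_{i} \Phi_i(A\CC)$ and the reduction to Theorem \ref{thm:main} are routine; the substantive work lies entirely in the orbit-to-group transfer of the almost-everywhere tiling identity.
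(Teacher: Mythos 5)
Your proposal is correct and follows essentially the same route as the paper's proof: reduce to Theorem \ref{thm:main}, obtain measurability and Riemann integrability of $W$ from the regularity of $\CC$ and the orbit maps $\Phi_i$, and transfer the quasi-lattice identity $\sum_{A \in \mathcal{D}} \mathbb{1}_{A\CC} = 1$ from $H$ to $\mathbb{R}^d$ via the equivariance $\Phi_i(AC) = A^{-T}\Phi_i(C)$ together with the disjointness and co-null covering of the orbits $\mathcal{O}_i$. The only difference is cosmetic: where the paper cites \cite{BeTa} for the fact that $\Phi_i$ is a homeomorphism carrying Haar measure to a measure equivalent to Lebesgue measure, you argue inline that $\Phi_i$ is a diffeomorphism (freeness plus openness of the orbit making the differential invertible), which is a valid substitute for the cited fact.
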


\begin{proof}
To use Theorem \ref{thm:main}, we need to verify that $W$ is a measurable, Riemann integrable set that is multiplicatively tiling w.r.t. $\mathcal{D}^{-T}$.
Measurability and Riemann integrability are provided by the assumptions on $\CC$: We start out by observing that each $\Phi_i : H \to \mathcal{O}_i$ is a homeomorphism, with the additional property that the image of the Haar measure under $\Phi$ is equivalent to the Lebesgue measure \cite{BeTa}. In particular, the subset $\Phi_i(\CC) \subset \mathcal{O}_i$ is a Borel set with boundary of measure zero and compact closure in $\mathcal{O}_i$. Since the $\mathcal{O}_i$ are open and pairwise disjoint, this implies that the boundary of the union $W = \bigcup_{i=1}^m \Phi_i(\CC)$ is the union of the boundaries of the $\Phi_i(\CC)$, hence again of measure zero. Thus $W$ is Borel-measurable and Riemann-integrable. 

For the multiplicative tiling property, note that 
 \[
  \Phi_i (A_1A_2) = (A_1 A_2)^{-T} (\xi_i) = A_1^{-T} A_2^{-T} \xi_i = A_1^{-T} \Phi_i(A_2). 
 \]
This observation entails
\begin{align}\label{complementEquivalence}
 \sum_{A \in \mathcal{D}} \mathbb{1}_{A \CC} = 1 \mbox{ a.e. on $H$} \Leftrightarrow 
 \sum_{A \in \mathcal{D}} \mathbb{1}_{A^{-T} W} = 1 \mbox{ a.e. on } \mathbb{R}^d,
\end{align}
since $\mathbb{R}^d \setminus \bigcup_{i=1}^m \mathcal{O}_i$ is a set of measure zero. Because $\mathcal{D}$ is a quasi lattice with complement $\CC$ the assumptions of Theorem \ref{thm:cwt_basis} imply the left hand side of (\ref{complementEquivalence}). Then the right hand side holds as well which is equivalent to $W$ being multiplicatively tiling with respect to $\mathcal{D}^{-T}$.
\end{proof}

\begin{remark} Note that the set $W$ depends not only on $\mathcal{C}$, but also on the choice of the $\xi_i$. This choice does not affect the multiplicative tiling properties of $W$, but it can be expected to have a crucial impact on its {\em translational tiling} properties; that is why these properties are contained in the assumptions of the theorem. We expect that for the systematic application of Theorem \ref{thm:cwt_basis} for the construction of wavelet Riesz bases, the proper choice of the $\xi_i$ is the main obstacle. 
\end{remark}

\begin{remark} \label{rem:sampling} 
The fundamental relationship between continuous and discrete wavelet transforms is realized by observing that the discrete wavelet coefficients $\langle f, D_A T_\lambda \psi \rangle$ are just samples of the continuous transform:
\[
   \mathcal{W}_\psi f (A \lambda, A)=\langle f, D_A T_\lambda \psi \rangle.  
\]
Thus the continuously labelled wavelet system $(T_x D_A \psi)_{(x,A) \in G}$ is replaced by a discretely labelled subsystem, with very similar characteristics. In particular, the frame inequalities associated to the wavelet Riesz basis,  
\[
  C_1 \| f \|_2^2 \le \sum_{\lambda \in \Lambda, A \in \mathcal{D}} |W_\psi f(A\lambda,A)|^2 \le  C_2 \| f \|_2^2.
\]  can be combined with Equation (\ref{eqn:cwt_isometric}) to yield a {\em sampling theorem} for the image space of $\mathcal{W}_\psi$, namely 
\[
 \frac{C_1}{c_\psi} \| \mathcal{W}_{\psi} f \|_2^2 \le \sum_{\lambda \in \Lambda, A \in \mathcal{D}} |W_\psi f(A\lambda,A)|^2 \le  \frac{C_2}{c_\psi}  \| W_\psi f \|_2^2. 
\] I.e., each $\mathcal{W}_\psi f$ can be robustly reconstructed from its values on the {\em sampling grid} $\{(A\lambda,A): A \in \mathcal{D}, \lambda \in \Lambda \}$. 

Here, the Riesz basis property of the subsystem can be understood as {\em removing the redundancies} from the continuously labelled system: 
Using the dual basis via (\ref{eqn:interpol}), we see that for each square-summable coefficient family $(c_{\lambda,A})_{\lambda \in \Lambda, A \in \mathcal{D}}$ there exists a unique $f \in L^2(\mathbb{R}^d)$ satisfying 
\[ \forall (\lambda,A) \in \Lambda \times \mathcal{D}~:~ W_\psi f (A\lambda,A) = c_{\lambda,A}.\]  In the parlance of sampling theory, we have that the set \mbox{$\{ (A\lambda,A) : A \in \mathcal{D}, \lambda \in \Lambda \}$} is an {\em interpolating set of sampling} for the image space $\mathcal{W}_\psi(L^2(\mathbb{R}^d))$ of the wavelet transform. 
\end{remark}
\section{Examples}
\label{sec:examples}

We will now exhibit two instances in dimension $d=2$, where Theorem \ref{thm:cwt_basis} and Theorem \ref{thm:main}, respectively, can be applied.

\subsection*{Shearlet Riesz bases}
\label{sect:Shearlet}

Shearlets have been the subject of intensive research activity in recent years, and the construction of shearlet frames is a particularly active branch, albeit often with focus on {\em cone-adapted} shearlets, which do not quite fit our framework. As a small sample representing a larger body of work, we mention \cite{Gr,GuLa,KiKuLi}. 

The {\bf shearlet dilation group} $H$ is the group generated by scaling matrices $A_a,$~\mbox{$a\in \RR^+$}, and shearing matrices $S_m,~m\in\RR$, which are defined by 
\begin{align}
A_a=
\begin{pmatrix}
a & 0  \\
0 & a^{c}
\end{pmatrix}
\text{ and }
S_m=
\begin{pmatrix}
1 & m  \\
0 & 1
\end{pmatrix}.
\label{matrixdefinition}
\end{align}
Here $c \in \mathbb{R}$ can be viewed as an anisotropy parameter of the shearlet system, which will be assumed fixed throughout this subsection. 
Then
\[
 H = \{ S_m A_a : m \in \mathbb{R}, a \in \mathbb{R}^+ \},
\]  is easily seen to constitute a closed subgroup of $Gl(2,\mathbb{R})$, using the relations
\begin{equation} \label{eqn:relations_shearlet}
 A_{a_1} A_{a_2} = A_{a_1a_2}~,~ S_{m_1} S_{m_2} = S_{m_1 + m_2}~,~ A_a S_m = S_{a^{1-c} m} A_a. 
\end{equation} Furthermore, the factorization $h = S_m A_a\in H$ is in fact unique.

Finally, the computation 
\[
 (S_m A_a)^T \left( \begin{array}{c} \pm 1 \\ 0 \end{array} \right) =  \pm
 \left( \begin{array}{c}  a \\ m a^c \end{array} \right)
\] shows that $\pm \mathbb{R}_{>0} \times \mathbb{R}$ are open free dual orbits, with complement of measure zero.
We next exhibit a quasi-lattice in $H$, with associated  multiplicative tile in $\mathbb{R}^2$:
\begin{lemma}
\label{lemma:multTile}
Fix $a>1$, the scaling parameter, and $b>0$, the shearing parameter, and define
\[
 \mathcal{D}_{a,b} = \{ A_{a^k} S_{mb} : k, m \in \mathbb{Z} \}\subset H,
\] as well as 
\[
\CC_{a,b} = \left\{ S_y A_s : y \in \left (-\frac{b}{2} , \frac{b}{2} \right), s \in (a^{-1},1) \right\}\subset H. 
\] Then $\mathcal{D}_{a,b}$ is a quasi-lattice with complement $\CC_{a,b}$. The multiplicative tile constructed from $\CC$ and $\xi_{1/2} = \pm (1,0)^T$ is
\[
 W^{a,b} = \left\{ \left( \begin{array}{c} x \\ y \end{array} \right) : |x| \in (1,a), |y| < \frac{b|x|}{2} \right\}.
\]
\end{lemma}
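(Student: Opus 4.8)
The plan is to work throughout in the global coordinate chart on $H$ furnished by the unique factorization $h = S_y A_s$, identifying $H$ with $\RR \times \RR^+$ via $(y,s) \mapsto S_y A_s$. First I would record the group law in these coordinates: combining the relations (\ref{eqn:relations_shearlet}) gives
\[
 (S_{y_1} A_{s_1})(S_{y_2} A_{s_2}) = S_{y_1 + s_1^{1-c} y_2}\, A_{s_1 s_2},
\]
so left multiplication by $(y_0, s_0)$ acts as $(y, s) \mapsto (y_0 + s_0^{1-c} y,\, s_0 s)$. This is an affine map of constant (nonzero) Jacobian $s_0^{2-c}$, from which one reads off that the left Haar measure is $s^{c-2}\, dy\, ds$; the only feature I actually need is that this measure is mutually absolutely continuous with Lebesgue measure on $\RR \times \RR^+$, so that ``almost everywhere'' may be verified in the Lebesgue sense.

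Next I would dispatch the structural requirements on $\CC_{a,b}$. In coordinates $\CC_{a,b}$ is the open rectangle $(-\tfrac{b}{2}, \tfrac{b}{2}) \times (a^{-1}, 1)$, whose closure $[-\tfrac{b}{2}, \tfrac{b}{2}] \times [a^{-1}, 1]$ is compact in $\RR \times \RR^+$ (as $a^{-1} > 0$); hence $\CC_{a,b}$ is relatively compact and open, with boundary of measure zero. The heart of the matter is then the tiling identity $\sum_{A \in \mathcal{D}_{a,b}} \charFunc_{A\CC_{a,b}} = 1$ a.e. Here I would first rewrite the lattice elements in $SA$ form: using $A_s S_m = S_{s^{1-c} m} A_s$ with $s = a^k$ gives
\[
 A_{a^k} S_{mb} = S_{a^{k(1-c)} mb}\, A_{a^k},
\]
i.e. the coordinates $(a^{k(1-c)} mb,\, a^k)$. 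Applying the group law, the image $A_{a^k} S_{mb}\, \CC_{a,b}$ becomes the product set
\[
 a^{k(1-c)}\!\left( (m - \tfrac{1}{2}) b,\ (m + \tfrac{1}{2}) b \right) \times \left( a^{k-1},\, a^k \right).
\]

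I would then argue that these sets tile $\RR \times \RR^+$ up to a null set: for fixed $k$, the intervals $a^{k(1-c)}((m-\tfrac12)b,\,(m+\tfrac12)b)$, $m \in \ZZ$, cover $\RR$ up to the countable set of endpoints, since the base intervals $((m-\tfrac12)b,\,(m+\tfrac12)b)$ tile $\RR$ and the positive factor $a^{k(1-c)}$ preserves this; and the second factors $(a^{k-1}, a^k)$, $k \in \ZZ$, tile $\RR^+$ up to the countable set $\{a^k\}$. The product of these two tilings partitions $\RR \times \RR^+$ up to a Lebesgue-null set, which by the equivalence of measures is the desired Haar-a.e. identity. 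Hence $\mathcal{D}_{a,b}$ is a quasi-lattice with complement $\CC_{a,b}$. Finally I would compute the multiplicative tile: for $C = S_y A_s$ the transpose inverse is
\[
 C^{-T} = \begin{pmatrix} s^{-1} & 0 \\ -y s^{-1} & s^{-c} \end{pmatrix},
\]
so $C^{-T} \xi_1 = (s^{-1},\, -y s^{-1})^T$. Writing $x = s^{-1}$, which ranges over $(1, a)$ as $s \in (a^{-1}, 1)$, the second coordinate $-yx$ ranges over $(-\tfrac{bx}{2}, \tfrac{bx}{2})$, giving the part of $W^{a,b}$ with $x > 0$; taking $\xi_2 = -(1,0)^T$ negates both coordinates and produces the part with $x < 0$, with $|x| \in (1,a)$ and second coordinate in $(-\tfrac{b|x|}{2}, \tfrac{b|x|}{2})$. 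The union is exactly $\{(x,y)^T : |x| \in (1,a),\ |y| < \tfrac{b|x|}{2}\} = W^{a,b}$.

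The matrix inversion and the coordinate book-keeping are routine; the one step deserving genuine care is the tiling argument, where the semidirect-product structure makes the $y$-coordinate scale by the factor $s^{1-c}$. The point to get right is that this anisotropic scaling does \emph{not} spoil the tiling: because the factor $a^{k(1-c)}$ depends only on $k$, it is common to all $m$-translates at a fixed scale, so the translational tiling in $y$ survives intact and dovetails with the multiplicative tiling in $s$. I expect this interaction to be the main conceptual obstacle, the remainder being direct computation.
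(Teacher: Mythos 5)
Your proposal is correct and follows essentially the same route as the paper: writing everything in the $S_yA_s$ coordinates via the unique factorization and the commutation relation $A_aS_m = S_{a^{1-c}m}A_a$ reduces the quasi-lattice property to exactly the paper's two scalar equations $t = a^k s$, $r = a^{(1-c)k}(mb+y)$ (your explicit product-tiling of $\RR\times\RR^+$ by the rectangles $a^{k(1-c)}\bigl((m-\tfrac12)b,(m+\tfrac12)b\bigr)\times(a^{k-1},a^k)$ is just the geometric restatement of the paper's a.e.\ unique solvability), and your computation of $W^{a,b}$ from $C^{-T}\xi_{1/2}$ matches the paper's, correcting in passing a harmless sign slip in its intermediate expression $(-s^{-1},-s^{-1}y)^T$. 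Your additional verifications---relative compactness of $\CC_{a,b}$, null boundary, and equivalence of the left Haar measure $s^{c-2}\,dy\,ds$ with Lebesgue measure---are details the paper leaves implicit, and they are correctly handled.
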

\begin{proof}
 In order to prove the quasi-lattice property (\ref{def:quasiLattice}) for $\mathcal{D}$ with complement $\CC$, we need to show that, for almost every pair $(t,r) \in \mathbb{R}_{>0} \times \mathbb{R}$, the equation
 \[
  S_r A_t =  A_{a^k} S_{mb}  S_y A_s~,k,m \in \mathbb{Z},  y \in \left (-\frac{b}{2} , \frac{b}{2} \right), s \in (a^{-1},1)
 \] is uniquely solvable. 
 Using (\ref{eqn:relations_shearlet}), together with the unique factorization property, this is equivalent to uniquely solving
 \[
  t = a^k s ~, ~r = a^{(1-c)k} (mb+y)~.
 \] Now for $t \not\in a^\mathbb{Z}$, the first equation has a unique solution $k \in \mathbb{Z}, s \in (a^{-1},1)$. With $k$ fixed, one then sees that the second equation is uniquely solvable for $(m,y)$, whenever $ra^{-(1-c)k} \not\in b \mathbb{Z}$. Thus $\mathcal{D}_{a,b}$ is a quasi-lattice with complement $\CC_{a,b}$. 
 
 Following the description in Theorem \ref{thm:cwt_basis}, with $\xi_{1/2} = \pm (1,0)^T$, we get
 \begin{eqnarray*}
  W^{a,b} & = & \left\{ \pm (S_y A_s)^{-T} \left( \begin{array}{c} 1 \\ 0 \end{array} \right)~: y \in \left (-\frac{b}{2} , \frac{b}{2} \right), s \in (a^{-1},1) \right\} \\
  & = &  \left\{ \pm \left( \begin{array}{c} -s^{-1} \\ -s^{-1} y \end{array} \right)~: y \in \left (-\frac{b}{2} , \frac{b}{2} \right), s \in (a^{-1},1) \right\} \\
  & = & \left\{ \left( \begin{array}{c} x \\ y \end{array} \right) : |x| \in (1,a), |y| < \frac{b|x|}{2} \right\}.
 \end{eqnarray*}
\end{proof}
The set $W=W^{a,b}$ is visualized by the grey shaded area in Figure \ref{WZerlegung}, which also gives a graphical  indication why the action of $\mathcal{D}^{-T}$ provides a multiplicative tiling. In order to prove the existence of a shearlet Riesz basis, it remains to verify the translational $k$-tiling property of $W^{a,b}$ from Lemma \ref{lemma:multTile}. 
\begin{lemma}
\label{mykTilingTheorem}
Let $a>1,b>0$ be rational. Then there exists a lattice $\Gamma \subset \mathbb{R}^2$ and $k \in \mathbb{N}$ such that
$W^{a,b}$ is $k$-tiling with respect to $\Gamma$.
\end{lemma}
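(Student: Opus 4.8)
The plan is to write $a=p/q$ with integers $p>q\ge 1$ and to produce, explicitly, a rectangular lattice $\Gamma=\tfrac1q\mathbb{Z}\times\beta\mathbb{Z}$ with $\beta:=\tfrac{b(a+1)}{2}$, together with $k:=2(p-q)$, and then to verify the $k$-tiling by analyzing the integer-valued periodization $\Sigma(\xi)=\sum_{\gamma\in\Gamma}\mathbb{1}_{W^{a,b}}(\xi-\gamma)$ directly. I would begin by splitting $W^{a,b}=T_+\sqcup T_-$ into its right and left trapezoids $T_+=\{(x,y):x\in(1,a),\ |y|<bx/2\}$ and $T_-=\{(x,y):x\in(-a,-1),\ |y|<-bx/2\}$, recording the central symmetry $T_-=-T_+$.

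The decisive step is to \emph{fold} $T_-$ onto $T_+$ using a lattice translation. Since $a+1=(p+q)/q$ is an integer multiple of the horizontal period $1/q$, the vector $(a+1,0)$ lies in $\Gamma$, so reindexing the sum gives $\Sigma(\xi)=\sum_{\gamma}\big(\mathbb{1}_{T_+}+\mathbb{1}_{(a+1,0)+T_-}\big)(\xi-\gamma)$; everything here stays at the level of the integer-valued periodization, so the overlap of $T_+$ and $(a+1,0)+T_-$ only contributes as a multiplicity and never needs disjointness. One computes $(a+1,0)+T_-=\{(x,y):x\in(1,a),\ |y|<b(a+1-x)/2\}$, so over each vertical line $x\in(1,a)$ the two trapezoids contribute two symmetric $y$-intervals of radii $r_1=bx/2$ and $r_2=b(a+1-x)/2$, whose radii sum to the constant $r_1+r_2=b(a+1)/2=\beta$, exactly the vertical period.

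I would then treat the two directions separately. For the vertical direction I would prove an elementary slice identity: writing $N(t)=\#\big(\beta\mathbb{Z}\cap(-\infty,t)\big)$, one has $N(t+\beta)=N(t)+1$, hence
\[ \#\big(\beta\mathbb{Z}\cap(y-r_1,y+r_1)\big)+\#\big(\beta\mathbb{Z}\cap(y-r_2,y+r_2)\big)=2 \quad\text{whenever } r_1+r_2=\beta, \]
since, substituting $r_2=\beta-r_1$, the four terms $N(y+r_1)-N(y-r_1)+N(y+r_2)-N(y-r_2)$ collapse via $N(y+r_2)=N(y-r_1)+1$ and $N(y-r_2)=N(y+r_1)-1$. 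Thus the $\beta\mathbb{Z}$-periodization along every admissible vertical line equals $2$, independently of $x$ and $y$. For the horizontal direction, the strip $x\in(1,a)$ has width $a-1=(p-q)/q$, an integer multiple $p-q$ of the period $1/q$, so for almost every $x$ exactly $p-q$ translates $x-m/q$ fall in $(1,a)$. Multiplying the two contributions yields $\Sigma\equiv 2(p-q)=k$ almost everywhere, and I would close with the consistency check $\meas(W^{a,b})=b(a^2-1)=2(p-q)\cdot\tfrac{b(a+1)}{2q}=k\cdot\mathrm{covol}(\Gamma)$.

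The step I expect to carry all the weight is the folding in the second paragraph: the continuously varying fiber length of a single trapezoid can never periodize to a constant under any lattice, and it is precisely the central symmetry $T_-=-T_+$ that lets one lattice translation superimpose $T_-$ so that the two fiber lengths become complementary and sum to the vertical period. Choosing the horizontal period $1/q$ so that both $a-1$ and $a+1$ become integer multiples of it is exactly where the rationality of $a$ is used; interestingly, rationality of $b$ plays no role in this particular construction.
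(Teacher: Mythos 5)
Your proof is correct, and while it rests on the same key geometric fact as the paper's proof --- the central symmetry $W_{-}=-W_{+}$ makes the fiber lengths of the two wedges complementary, so that together they amount to a rectangle --- the execution is genuinely different. The paper cuts $W^{a,b}$ into six pieces $W_{\pm,1},W_{\pm,2},W_{\pm,3}$ and translates them by lattice vectors so that they reassemble, disjointly up to null sets, into an honest rectangle $W'=[-(a-1),a-1]\times\left[-\frac{b}{2},\frac{ab+b}{2}\right]$, then chooses $\alpha,\beta\in\mathbb{Q}$ so that the lattice $\alpha\mathbb{Z}\times\beta\mathbb{Z}$ contains all the translations used and divides the rectangle evenly. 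You instead fold with a single lattice translation $(a+1,0)$ and work throughout at the level of the integer-valued periodization, where overlaps are harmless, replacing the cut-and-paste by the one-dimensional counting identity for complementary radii $r_1+r_2=\beta$. This buys three things: fully explicit data ($\Gamma=\frac{1}{q}\mathbb{Z}\times\frac{b(a+1)}{2}\mathbb{Z}$, $k=2(p-q)$, confirmed by your covolume check); no disjointness bookkeeping for the rearranged pieces; and a genuine weakening of the hypotheses, since rationality of $b$ is indeed never used, only $a\in\mathbb{Q}$ --- the paper's argument can also be run with only $a\in\mathbb{Q}$ by taking $\beta=\frac{b}{2q}$, but as written it invokes rationality of $b$ to produce a rational $\beta$. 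What the paper's rearrangement scheme buys in exchange is a template that transfers essentially verbatim to the rotation example of Lemma \ref{myKtilingRotation}, whose set $V$ is recombined into a rectangle by the same six-piece procedure, whereas your slice identity is tailored to fibers over a fixed strip.

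One cosmetic repair: as stated, $N(t)=\#\big(\beta\mathbb{Z}\cap(-\infty,t)\big)$ is infinite. Take instead $N(t)=\lceil t/\beta\rceil$ (equivalently, count lattice points in $[0,t)$ with the usual sign convention), which satisfies $N(t+\beta)=N(t)+1$ and $\#\big(\beta\mathbb{Z}\cap(y-r,y+r)\big)=N(y+r)-N(y-r)$ whenever $y\pm r\notin\beta\mathbb{Z}$ --- a null set of exceptional $y$ for each of the finitely many relevant fibers, which is all the almost-everywhere statement requires. With that replacement, your slice identity and the final multiplication of the horizontal count $p-q$ by the vertical count $2$ go through exactly as you wrote them.
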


\begin{proof}
Shifting a set by an element of a lattice does not affect its tiling property with regard to the same lattice. 
To show the translational $k$-tiling property we partition $W=W^{a,b}$ into subsets and shift and merge the subsets to obtain a set $W'$ whose $k$-tiling property provides that $W$ itself is $k$-tiling.

 The set $W$ and the following partition into subsets are indicated in Figure \ref{WZerlegung}.
\begin{align*}
W_+ & =\left\{(x,y)\in W \mid x>0\right\}, &
W_{+,1} & =\left\{(x,y)\in W_+ \left| |y|\leq \frac{b}{2}\right.\right\},\\
W_{+,2} & =\left\{(x,y)\in W_+ \left| \frac{b }{2}< y\right.\right\}, &
W_{+,3}& =\left\{(x,y)\in W_+ \left| y< -\frac{b}{2}\right.\right\}.
\end{align*}
Note that $W$ is point symmetric with respect to $(0,0)$. We also define $W_{-}=-W_+$ and $W_{-,i} =-W_{+,i},~(i=1,2,3)$. We obtain $W=W_+\dcup W_-$, $W_+=W_{+,1}\dcup W_{+,2}\dcup W_{+,3}$ and $W_{-}=W_{-,1}\dcup W_{-,2}\dcup W_{-,3}$.
We shift the sets $W_{\pm,i}$ by certain vectors to and merge the results to obtain a rectangular set $W'$, whose $k$-tiling property is easy to determine. We will then have to make sure that there exists a lattice containing the translations employed in the process, for which $W'$ is $k$-tiling; here the rationality conditions on $a$ and $b$ come into play. 

Now let us describe the shift-and-merge procedure in more detail. 
In the following computation, note that all set theoretic equalities in this proof have to be understood up to measure zero sets, and all unions are understood as disjoint.
\begin{align*}
W' &=\left(W_{+,1}+\left(\begin{array}{c} -1  \\ 0 \end{array}\right)\right)  \cup \left(W_{-,1}+\left(\begin{array}{c} 1 \\ 0 \end{array}\right)\right)\\
&~\cup \left(W_{+,2}+\left(\begin{array}{c} -1  \\ 0 \end{array}\right)\right)\cup \left(W_{-,2}+\left(\begin{array}{c} 2 \\ \frac{ab+b}{2} \end{array}\right)\right)\\
&~\cup \left(W_{+,3}+\left(\begin{array}{c} -2 \\ \frac{ab+b}{2} \end{array}\right)\right)\cup \left(W_{-,3}+\left(\begin{array}{c} 1  \\ 0 \end{array}\right)\right)\\
& =[-(a-1),a-1]\times \left[-\frac{b}{2},\frac{ab+b}{2}\right].
\end{align*}
Since we assumed $a$ to be rational, there exists $\alpha \in \mathbb{Q}$ such that $\{1,2a-2\} \subset \alpha \mathbb{Z}$.  Similarly, there exists $\beta \in \mathbb{Q}$ such that $\{ \frac{ab+b}{2}, \frac{ab+2b}{2} \}
\in \beta \mathbb{Z}$. But this implies that all translations employed in the construction of $W'$ are contained in $ \Gamma = \alpha \mathbb{Z} \times \beta \mathbb{Z}$, and in addition, $W'$ is $k$-tiling with respect to $\Gamma$, for 
\[
 k = \frac{2a-2}{\alpha} \cdot \frac{ab+2b}{2\beta} \in \mathbb{N}.
\]
Thus $W$ is $k$-tiling. 
\end{proof}

With Lemma \ref{lemma:multTile}, Lemma \ref{mykTilingTheorem} and Theorem \ref{thm:cwt_basis} we deduce the following result: 
\begin{corollary}
 For any choice of rational numbers $a>1$ and $b>0$, there exists a $\psi \in {\rm L}^2(\mathbb{R}^2)$ and a set $\Lambda \subset \mathbb{R}^2$ such that $\left( D_{A_{a^k}} D_{S_{mb}} T_\lambda \psi \right)_{k,m \in \mathbb{Z}, \lambda \in \Lambda}$ is a Riesz basis. 
\end{corollary}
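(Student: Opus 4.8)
The plan is to read off the corollary as the specialization of Theorem~\ref{thm:cwt_basis} to the shearlet dilation group $H$, feeding it the two preceding lemmas, and then to rephrase the resulting wavelet-set statement in terms of the operators $D_{A_{a^k}} D_{S_{mb}}$.

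First I would check that all hypotheses of Theorem~\ref{thm:cwt_basis} are met. The discussion preceding Lemma~\ref{lemma:multTile} already shows, via the computation of $(S_m A_a)^T(\pm 1,0)^T$, that $\pm\mathbb{R}_{>0}\times\mathbb{R}$ are open free dual orbits for the base points $\xi_{1/2}=\pm(1,0)^T$, and that their union is co-null in $\mathbb{R}^2$. Lemma~\ref{lemma:multTile} then provides the quasi-lattice $\mathcal{D}_{a,b}$ with complement $\CC_{a,b}$ and identifies the corresponding multiplicative tile as $W^{a,b}$. Since $a$ and $b$ are rational, Lemma~\ref{mykTilingTheorem} supplies a lattice $\Gamma\subset\mathbb{R}^2$ and a $k\in\NN$ for which $W^{a,b}$ is translationally $k$-tiling with respect to $\Gamma$. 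With these inputs, Theorem~\ref{thm:cwt_basis} yields a set $\Lambda\subset\mathbb{R}^2$ such that $W^{a,b}$ is a $(\mathcal{D}_{a,b},\Lambda)$-Riesz-wavelet set.

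Next I would unfold this conclusion. By definition of a Riesz-wavelet set, the function $\psi$ determined by $\widehat{\psi}=c_\psi\charFunc_{W^{a,b}}$ is a $(\mathcal{D}_{a,b},\Lambda)$-Riesz-wavelet, i.e.\ $(D_A T_\lambda\psi)_{A\in\mathcal{D}_{a,b},\,\lambda\in\Lambda}$ is a Riesz basis of $L^2(\mathbb{R}^2)$. To recover the exact indexing of the corollary, I would invoke that $A\mapsto D_A$ is multiplicative, $D_{A_1 A_2}=D_{A_1}D_{A_2}$ (immediate from the definition of $D_A$), and write each element of $\mathcal{D}_{a,b}$ as $A=A_{a^k}S_{mb}$, whence $D_A=D_{A_{a^k}}D_{S_{mb}}$. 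This turns the Riesz basis into the claimed family $(D_{A_{a^k}}D_{S_{mb}}T_\lambda\psi)_{k,m\in\ZZ,\,\lambda\in\Lambda}$.

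The only point needing a brief verification, rather than a genuine obstacle, is that $(k,m)\mapsto A_{a^k}S_{mb}$ is a bijection onto $\mathcal{D}_{a,b}$, so that the double-indexed family and the family indexed by $\mathcal{D}_{a,b}$ coincide term by term; this follows from the unique factorization in $H$, since $A_{a^k}S_{mb}=S_{a^{k(1-c)}mb}A_{a^k}$ is injective in $(k,m)$ because $a>1$ and $b>0$. Apart from this bookkeeping, the corollary is a direct assembly of the earlier results, with the rationality of $a$ and $b$ entering only through Lemma~\ref{mykTilingTheorem}.
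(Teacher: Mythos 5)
Your proposal is correct and matches the paper's argument exactly: the paper likewise deduces the corollary directly from Lemma~\ref{lemma:multTile}, Lemma~\ref{mykTilingTheorem} and Theorem~\ref{thm:cwt_basis}, with rationality of $a,b$ entering only through the $k$-tiling lemma. Your added bookkeeping (multiplicativity $D_{A_1A_2}=D_{A_1}D_{A_2}$ and injectivity of $(k,m)\mapsto A_{a^k}S_{mb}$ via the unique factorization in $H$) is a sound spelling-out of details the paper leaves implicit.
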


\begin{remark} 
Note that the discretization of both the shearing and the scaling parameter  in Lemma \ref{lemma:multTile} and its corollary can be chosen arbitrarily fine. 

Similar discretizations of $\mathbb{R}^d \rtimes H$  have been considered before, (see \cite{DaKuStTe, DAHLKE10}), in particular the multiplicative tiling induced by the shearlet group is well-known. 
The main difference between our construction and the discretizations in \cite{DaKuStTe, DAHLKE10} lies in the set $\Lambda$, which determines the translational part of the discretization. In  \cite{DaKuStTe, DAHLKE10} the set $\Lambda$ is a lattice whereas we employ a quasicrystal. 

Furthermore, note that the results in this section hold independently of the choice of the anisotropy parameter $c$. In the shearlet literature it is customary to consider only $c=1/2$. Our proof shows that the {\em same} set $W$ serves as  Riesz wavelet set for a whole class of distinct dilation groups. 
\end{remark}
\begin{remark}
The construction of cone-adapted shearlets can be roughly described as a  two-step procedure: First the frequencies are decomposed into two disjoint cones, and then each cone is subjected to a shearlet-type tiling. For suitable choices of $a$ and $b$, our construction of shearlet tilings is fully compatible with this decomposition, hence we expect that there exist cone-adapted shearlet Riesz bases, as well.  

There are however variations, for which it is currently open whether a Riesz wavelet exists. For instance, extending the group $H$ by including the simple reflection $-I_2$ apparently makes the problem of finding such a set much harder: Our proof of Lemma \ref{mykTilingTheorem} crucially depends on the fact that $W^{a,b}$ is the union of {\em two} wedge-shaped sets, which can be cut up and recombined to yield a rectangle. 

Furthermore, the generalization to dimension $n=3$ is also not clear to us. 
\end{remark}

\begin{figure}
\centering

\begin{tikzpicture}[
    scale=5,
    axis/.style={very thick, ->, >=stealth'},
    important line/.style={thick},
    dashed line/.style={dashed, thin},
    pile/.style={thick, ->, >=stealth', shorten <=2pt, shorten
    >=2pt},
    every node/.style={color=black}
    ]
    \draw[axis] (0cm,0.5)  -- (2.0,0.5) node(xline)[right]
        {$\RR$};
    \draw[axis] (1.0,-0.9cm) -- (1.0,1.9) node(yline)[above] {$\RR$};

		\draw[line width=0.35mm] (1.45,0.28cm) -- (1.45,0.72);
		\draw[line width=0.35mm] (1.9,0.06cm) -- (1.9,0.95);
		
		\draw[line width=0.35mm] (1.45,0.72cm) -- (1.9,0.95);
		\draw[line width=0.35mm] (1.45,0.28cm) -- (1.9,0.06);
		
		\draw[line width=0.35mm] (1.45,0.72cm) -- (1.9,0.72);
		\draw[line width=0.35mm] (1.45,0.28cm) -- (1.9,0.28);
	
		\draw[line width=0.35mm] (0.55,0.28cm) -- (0.55,0.72);
		\draw[line width=0.35mm] (0.1,0.06cm) -- (0.1,0.95);
		
		\draw[line width=0.35mm] (.55,0.72cm) -- (0.1,0.95);
		\draw[line width=0.35mm] (.55,0.28cm) -- (0.1,0.06);

		\draw[line width=0.35mm] (0.55,0.72cm) -- (0.1,0.72);
		\draw[line width=0.35mm] (0.55,0.28cm) -- (0.1,0.28);

				\draw[line width=0.35mm] (0.97,0.72cm) -- (1.0,0.72);
				\draw[line width=0.35mm] (0.97,0.28cm) -- (1.0,0.28);
				\draw[line width=0.35mm] (0.97,0.95cm) -- (1.0,0.95);
				\draw[line width=0.35mm] (0.97,0.06cm) -- (1.0,0.06);
				\draw[] (0.9,0.72) node {$\frac{b}{2}$};
				\draw[] (0.9,0.28) node {$-\frac{b}{2}$};
				\draw[] (0.9,0.95) node {$\frac{ab}{2}$};
				\draw[] (0.9,0.06) node {$-\frac{ab}{2}$};

				\draw[line width=0.35mm] (1.9,0.95cm) -- (1.9,1.84);
				\draw[line width=0.35mm] (1.45,0.72cm) -- (1.45,1.16);
				\draw[line width=0.35mm] (1.45,1.16cm) -- (1.9,1.84);

				\draw[line width=0.35mm] (0.1,0.95cm) -- (0.1,1.84);
				\draw[line width=0.35mm] (0.55,0.72cm) -- (0.55,1.16);
				\draw[line width=0.35mm] (0.55,1.16cm) -- (0.1,1.84);

	\draw[line width=0.35mm] (0.55,0.28cm) -- (0.55,-0.16);
		\draw[line width=0.35mm] (0.1,0.06cm) -- (0.1,-0.83);
	\draw[line width=0.35mm] (0.55,-0.16) -- (0.1,-0.83);
	
	\draw[line width=0.35mm] (1.45,0.28cm) -- (1.45,-0.16);
		\draw[line width=0.35mm] (1.9,0.06cm) -- (1.9,-0.83);
	\draw[line width=0.35mm] (1.45,-0.16) -- (1.9,-0.83);
	
	\begin{scope} 
\clip (1.9,0.06) -- (1.9,0.95) --(1.45,0.72) --(1.45,0.28);
\fill[gray,opacity=.4] (0,0) rectangle (2,2); 
\end{scope}

	\begin{scope} 
\clip (0.1,0.06) -- (0.1,0.95) --(0.55,0.72) --(0.55,0.28);
\fill[gray,opacity=.4] (0,0) rectangle (2,2); 
\end{scope}
	
\draw [decorate,decoration={brace,amplitude=8pt},xshift=0pt,yshift=0pt]
(1.9+0.2,0.72) -- (1.9+0.2,0.28)node [black,midway,xshift=30pt] {$W_{+,1}$};

\draw [decorate,decoration={brace,amplitude=8pt},xshift=0pt,yshift=0pt]
(1.9+0.2,0.95) -- (1.9+0.2,0.72)node [black,midway,xshift=30pt] {$W_{+,2}$};

\draw [decorate,decoration={brace,amplitude=8pt},xshift=0pt,yshift=0pt]
(1.9+0.2,0.28) -- (1.9+0.2,0.06)node [black,midway,xshift=30pt] {$W_{+,3}$};

\draw [decorate,decoration={brace,amplitude=8pt,mirror},xshift=0pt,yshift=0pt]
(0.1,0.72) -- (0.1,0.28)node [black,midway,xshift=-30pt] {$W_{-,1}$};

\draw [decorate,decoration={brace,amplitude=8pt,mirror},xshift=0pt,yshift=0pt]
(0.1,0.95) -- (0.1,0.72)node [black,midway,xshift=-30pt] {$W_{-,3}$};

\draw [decorate,decoration={brace,amplitude=8pt,mirror},xshift=0pt,yshift=0pt]
(0.1,0.28) -- (0.1,0.06)node [black,midway,xshift=-30pt] {$W_{-,2}$};

\draw [decorate,decoration={brace,amplitude=8pt,mirror},xshift=0pt,yshift=0pt]
(1.90+0.2,0.95) -- (1.9+0.2,1.84)node [black,midway,xshift=30pt] {$S_1^{-T} W_+$};

\draw [decorate,decoration={brace,amplitude=8pt,mirror},xshift=0pt,yshift=0pt]
(1.90+0.2,-0.83) -- (1.9+0.2,0.06)node [black,midway,xshift=30pt] {$S_{-1}^{-T} W_+$};

\draw [decorate,decoration={brace,amplitude=8pt},xshift=0pt,yshift=0pt]
(0.1,-0.83) -- (0.1,0.06)node [black,midway,xshift=-30pt] {$S_1^{-T}  W_-$};

\draw [decorate,decoration={brace,amplitude=8pt},xshift=0pt,yshift=0pt]
(0.1,0.95) -- (0.1,1.84)node [black,midway,xshift=-30pt] {$S_{-1}^{-T}  W_-$};
\end{tikzpicture}
\caption{The grey shaded area describes the set $W$ and its decomposition used in Lemma \ref{mykTilingTheorem}. To illustrate the multiplicative tiling property, we show $S_1^{-T}W$ and $S_{-1}^{-T}W$.}
\label{WZerlegung}
\end{figure}
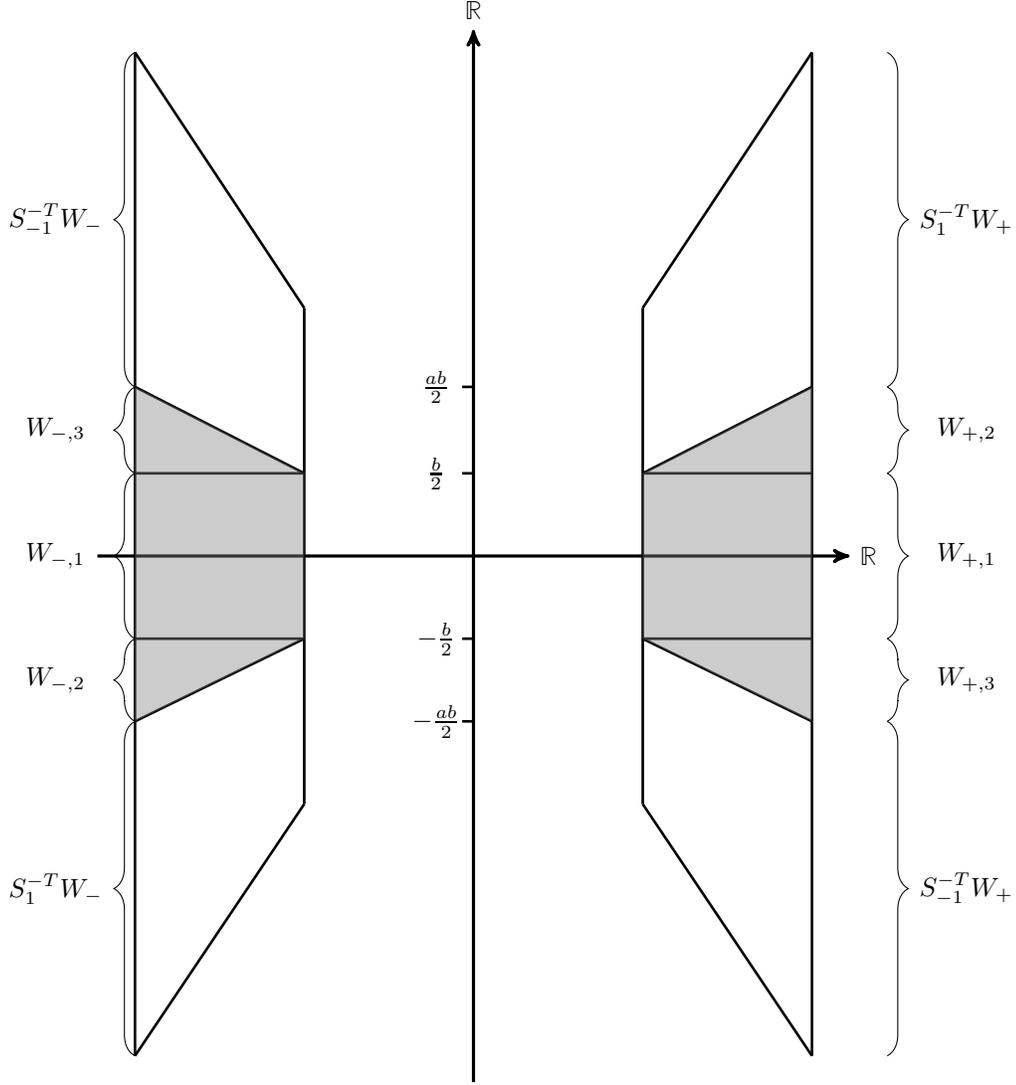

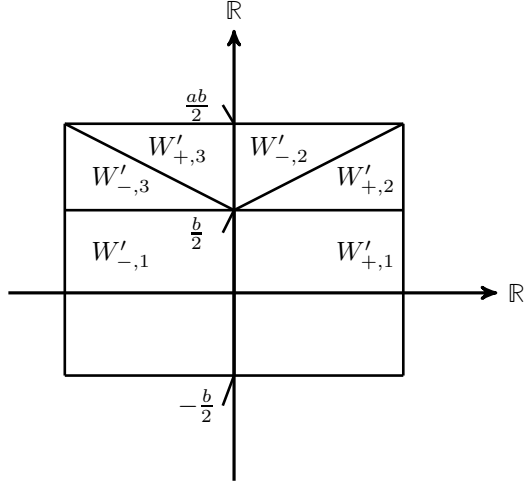
\begin{figure}
\centering
\begin{tikzpicture}[
    scale=5,
    axis/.style={very thick, ->, >=stealth'},
    important line/.style={thick},
    dashed line/.style={dashed, thin},
    pile/.style={thick, ->, >=stealth', shorten <=2pt, shorten
    >=2pt},
    every node/.style={color=black}
    ]
    \draw[axis] (0.4cm,0.5)  -- (1.7,0.5) node(xline)[right]
        {$\RR$};
    \draw[axis] (1.0,0cm) -- (1.0,1.2) node(yline)[above] {$\RR$};

		\draw[line width=0.35mm] (1.00,0.28cm) -- (1.00,0.72);
		\draw[line width=0.35mm] (1.45,0.28cm) -- (1.45,0.95);
		
		\draw[line width=0.35mm] (1.00,0.72cm) -- (1.45,0.95);
		\draw[line width=0.35mm] (1.00,0.95cm) -- (1.45,0.95);
		
		\draw[line width=0.35mm] (1.00,0.72cm) -- (1.45,0.72);
		\draw[line width=0.35mm] (1.00,0.28cm) -- (1.45,0.28);
		\draw[] (1.35,0.8) node {$W'_{+,2}$};
			\draw[] (1.35,0.6) node {$W'_{+,1}$};
				\draw[] (0.85,0.88) node {$W'_{+,3}$};
	
		\draw[line width=0.35mm] (1.00,0.28cm) -- (1.00,0.72);
		\draw[line width=0.35mm] (0.55,0.28cm) -- (0.55,0.95);
		
		\draw[line width=0.35mm] (1.00,0.72cm) -- (0.55,0.95);
		\draw[line width=0.35mm] (1.00,0.95cm) -- (0.55,0.95);

		\draw[line width=0.35mm] (1.00,0.72cm) -- (0.55,0.72);
		\draw[line width=0.35mm] (1.00,0.28cm) -- (0.55,0.28);
		\draw[] (0.7,0.8) node {$W'_{-,3}$};
			\draw[] (0.7,0.6) node {$W'_{-,1}$};
				\draw[] (1.12,0.88) node {$W'_{-,2}$};

				\draw[line width=0.35mm] (0.97,0.66cm) -- (1.0,0.72);
				\draw[line width=0.35mm] (0.97,0.20cm) -- (1.0,0.28);
				\draw[line width=0.35mm] (0.97,1.00cm) -- (1.0,0.95);
				\draw[] (0.9,0.66) node {$\frac{b}{2}$};
				\draw[] (0.9,0.20) node {$-\frac{b}{2}$};
				\draw[] (0.9,1.00) node {$\frac{ab}{2}$};

\end{tikzpicture}
\caption{The set $W'$ which is obtained by shifting and merging subsets of $W$.}
\label{WPrimeZerlegung}
\end{figure}

\subsection*[Example 2: Isotropic Dilations and Rotations]{Example 2: Isotropic Dilations and Rotations} 
\label{section:rotation}
In this example we pursue the same strategy as in the construction of a Shearlet basis, but this time the associated dilation group is the {\bf similitude group}, i.e. the group 
\[
 H  = \left\{ r \left( \begin{array}{cc} \cos(\theta) & -\sin(\theta) \\ \sin(\theta) & \cos(\theta) \end{array} \right): r>0, \theta \in \mathbb{R} \right\}  
\] consisting of rotations and scalar dilations of the plane. This group was introduced to wavelet analysis by \cite{Mu}. The aim of this subsection is to show the existence of Riesz wavelet sets adapted to arbitrarily fine resolutions of the rotation parameter. 

For even $n\in\NN_{>2}$ we consider the rotation matrix $R_n$ given by
\begin{align*}
R_n=
\begin{pmatrix}
\cos{\frac{2\pi}{n}} & -\sin{\frac{2\pi}{n}} \\ 
\sin{\frac{2\pi}{n}} & \cos{\frac{2\pi}{n}}
\end{pmatrix}
\end{align*}
and define the set
  $\mathcal{D}=\{a^k(R_n)^l\mid k\in\ZZ, l=0,\ldots,\frac{n}{2}-1\}\subset H$.
$R_n$ rotates the plane around $(0,0)$ with rotation angle $\frac{2\pi}{n}$; thus $(R_n)^l$ rotates with rotation angle $l\cdot \frac{2\pi}{n}$.

Instead of going via Theorem \ref{thm:cwt_basis}, we have found it more convenient to use Theorem \ref{thm:main} directly, that is, to choose a $V$, and show that it is $k$-tiling and $\mathcal{D}^{-T} \cdot V$ is a partition of $\RR^2$.
\begin{defi}[V]
For $a\in\NN_{\geq 2}$ and even $n\in\NN_{>2}$  define
\begin{align}
V=V^{a,n} & =\left\{(x,y)\in\RR^2 \left| |y|\in (1,a],~ |x|\leq \tan\left(\frac{\pi}{n}\right)|y| \right.\right\}.
\end{align}
\end{defi}
Since $a$ and $n$ are fixed throughout the example we usually omit them as superscripts, in order to simplify notation.
The formal definition of $V$ looks somewhat cryptic, and is best understood geometrically: $V$ can be constructed with the $n$ regular polygon centered around $(0,0)$ as illustrated in \mbox{Figure \ref{mySetV}}.
Let $K_n$ denote $n$ regular polygon with vertices on the boundary of $B_{r}(0)$, $r=(cos(\frac{\pi}{n}))^{-1}\delta a$, and let its rotation be such that one vertex has the coordinates $\left(\tan(\frac{\pi}{2})\delta a,\delta a\right)$. 
Then let $P_n$ equal $K_n\backslash a^{-1}K_n$, i.e., we cut out a smaller version of the polygon as indicated by \mbox{Figure \ref{mySetV}}. $P_n$ and isotropic scalings of $P_n$ are called $n$-regular polygonal rings. Then $P_n$ can be partitioned into $n$ congruent areas and $V$ equals to two opposing areas of $P_n$ (the proof is analogous for all choices of pairs of opposing areas).

The shape of $V$ is very similar to the shape of $W$ in Example 1, but rotated by an angle of $\frac{\pi}{2}$. 
However the shapes of $V_+$ and $W_+$ respectively differ as the slopes of the side elements are different.
Nevertheless, the $k$-tiling proof is almost exactly the same as in Lemma \ref{mykTilingTheorem}:
\begin{lemma}
\label{myKtilingRotation}
For $a\in\NN_{\geq 2}$ and an even $n\in\NN_{>2}$ the set
$V^{a,n}$ is $k$-tiling for some $k\in\NN$.
\end{lemma}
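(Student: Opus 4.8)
The plan is to re-run the shift-and-merge argument of Lemma~\ref{mykTilingTheorem}, transported by the rotation through $\pi/2$ that relates $V$ to $W$. Abbreviate $\tau=\tan(\pi/n)$ and exploit the point symmetry $V=V_+\dcup V_-$ with $V_-=-V_+$, where
\[
 V_+=\left\{(x,y):\ 1<y\le a,\ |x|\le \tau y\right\}.
\]
I would partition $V_+$ into the central rectangle $V_{+,1}=\{(x,y)\in V_+: |x|\le\tau\}$ and the two triangular flaps $V_{+,2}=\{(x,y)\in V_+: x>\tau\}$ and $V_{+,3}=\{(x,y)\in V_+: x<-\tau\}$, and put $V_{-,i}=-V_{+,i}$ for $i=1,2,3$. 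As in Lemma~\ref{mykTilingTheorem}, shifting a piece by a lattice vector leaves the tiling behaviour unchanged, so it suffices to translate these six pieces into a rectangle $V'$ whose $k$-tiling is immediate.

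First I would stack the two central rectangles by the shifts $(0,-1)$ and $(0,1)$, producing the central column $[-\tau,\tau]\times[-(a-1),a-1]$. For the flaps I would use the point symmetry directly: since $V_{-,2}$ is the reflection of $V_{+,2}$ through the origin, an appropriate translation turns it into the $180^\circ$ rotation of $V_{+,2}$ about the midpoint of the latter's hypotenuse, so that $V_{+,2}$ and the shifted $V_{-,2}$ together fill the axis-parallel box $[\tau,\tau a]\times[0,a-1]$; the pair $V_{+,3},V_{-,3}$ is merged analogously into $[\tau,\tau a]\times[-(a-1),0]$. Concretely, the six translation vectors can be taken to be
\[
 (0,-1),\ (0,1),\ (0,-1),\ (\tau(a+1),a),\ (\tau(a+1),-a),\ (0,1),
\]
and the resulting rectangle is
\[
 V'=[-\tau,\tau a]\times[-(a-1),a-1],
\]
of width $\tau(a+1)$ and height $2(a-1)$.

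The decisive point is that every translation vector above has first coordinate in $\tau\ZZ$ and second coordinate in $\ZZ$. This is exactly where the hypothesis $a\in\NN$ enters, and it also explains why the generally irrational slope causes no trouble: the only irrational quantity, $\tau$, is a global horizontal scale and can be absorbed into the lattice. Hence all shifts lie in $\Gamma=\tau\ZZ\times\ZZ$, and the rectangle $V'$, being a union of $(a+1)\cdot 2(a-1)$ fundamental cells of $\Gamma$, is $k$-tiling with respect to $\Gamma$ for
\[
 k=\frac{\tau(a+1)}{\tau}\cdot\frac{2(a-1)}{1}=2(a^2-1)\in\NN.
\]
Because the shift-and-merge operations preserve $k$-tiling with respect to $\Gamma$, the set $V=V^{a,n}$ is $k$-tiling, which is the assertion.

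The main obstacle is the geometric verification hidden in the second paragraph: one must check that the four flaps really reassemble, without gaps or overlaps, into the side strip under these specific translations. As the excerpt notes, the side elements of $V_+$ have a different slope ($1/\tau$) than those of $W_+$, so this step cannot be quoted verbatim from Lemma~\ref{mykTilingTheorem} but has to be re-examined. The cleanest way to dispose of it, as sketched above, is to pair each flap with its point-reflected partner into a full rectangular box, after which the remaining merges are translations of whole rectangles and the tiling of $V'$ is evident.
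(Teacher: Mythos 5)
Your proposal is correct and takes essentially the same route as the paper's own proof: the identical six-piece decomposition of $V$ into central strips $V_{\pm,1}$ and triangular flaps, the same shift-and-merge into a rectangle $V'$, and the same observation that translating pieces by lattice vectors preserves the tiling count. The only harmless deviations are that your merges go to the right where the paper's go to the left, and that you tile with the finer lattice $\tau\ZZ\times\ZZ$ (with $\tau=\tan(\pi/n)$), obtaining $k=2(a^2-1)$, whereas the paper uses $\Gamma=\tau(a+1)\ZZ\times\ZZ$ (its $b(a+1)\ZZ\times\delta\ZZ$) and gets the smaller constant $k=2(a-1)$.
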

\begin{proof}

Similar to the proof of Lemma \ref{mykTilingTheorem} we decompose $V$ into subsets, move them by elements of a lattice $\Gamma$ and merge them to obtain a set $V'$. Then the $k$-tiling property of $V'$ with lattice $\Gamma$ is easier to determine and leads to the $k$-tiling property of $V$. Let $b=\tan\left(\frac{\pi}{n}\right)\delta$. The following partition of $V$ into subsets is illustrated in Figure \ref{mySetV}. 
\begin{align*}
V_+ & =\left\{(x,y)\in V \left| y>0\right.\right\}, & 
V_{+,1} & =\left\{(x,y)\in V_+ \left| |x|\leq b\right.\right\},\\
V_{+,2} & =\left\{(x,y)\in V_+ \left| x<-b\right.\right\}, & 
V_{+,3}& =\left\{(x,y)\in V_+ \left| b<x\right.\right\}.
\end{align*}
Note that $V$ is point symmetric with respect to $(0,0)$ and define $V_{-}=-V_+$, \mbox{$V_{-,i}  =-V_{+,i},$} $(i=1,2,3)$.
We obtain $V=V_+\dcup V_-$, $V_+=V_{+,1}\dcup V_{+,2}\dcup V_{+,3}$ and $V_{-}=V_{-,1}\dcup V_{-,2}\dcup V_{-,3}$.
Fix the lattice $\Gamma=b(a+1)\ZZ\times \delta\ZZ$ and obtain:
\begin{align*}
V' &=\left(V_{+,1}+\left(\begin{array}{c} 0  \\ -\delta \end{array}\right)\right)  \cup \left(V_{-,1}+\left(\begin{array}{c} 0  \\ \delta \end{array}\right)\right)\\
&~ \cup \left(V_{+,2}+\left(\begin{array}{c} 0  \\ -\delta \end{array}\right)\right)\cup \left(V_{-,2}+\left(\begin{array}{c} -b(a+1) \\ \delta a \end{array}\right)\right)\\
&~ \cup \left(V_{+,3}+\left(\begin{array}{c}-b(a+1) \\ -\delta a \end{array}\right)\right)\cup \left(V_{-,3}+\left(\begin{array}{c}0  \\ \delta \end{array}\right)\right)\\
& =[-ba,b]\times[-\delta (a-1),\delta(a-1)].
\end{align*}
Now $F=(0,b(a+1)]\times(0,\delta]$ is a fundamental domain for $\Gamma$ and $V'$ consists of $2(a-1)$ shifted copies of $F$. Thus $V'$ is $k$-tiling with $k=2(a-1)$. Because all translations used in the construction of $V'$ are in $\Gamma$ the set $V$ is $k$-tiling w.r.t. $\Gamma$.
\end{proof}
\begin{lemma}
\label{myRotation}$V$ tiles $\RR^2$ under the action of $\mathcal{D}^{-T}$, that is
\[\mathcal{D}^{-T} \cdot V=\{g^{-T}V \mid g\in \mathcal{D}\}\]
is a partition of $\RR^2$.
\end{lemma}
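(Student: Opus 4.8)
The plan is to reduce the statement to two independent tilings, one rotational and one radial, after first disposing of the transpose-inverse. Every element of the similitude group $H$ has the form $g = r R_\theta$ with $r>0$ and $R_\theta$ a rotation, and since $R_\theta$ is orthogonal one computes $g^{-T} = (g^{-1})^T = r^{-1} R_\theta$: passing to $g^{-T}$ inverts the dilation factor but preserves the rotation. Applying this to $g = a^k (R_n)^l$ yields $g^{-T} = a^{-k}(R_n)^l$, and because $k$ runs over all of $\ZZ$ we get $\mathcal{D}^{-T} = \mathcal{D}$. Thus it is enough to show that the family $\{a^k (R_n)^l V : k \in \ZZ,\ l = 0,\ldots,\frac{n}{2}-1\}$ partitions $\RR^2$ up to a set of measure zero.

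For the rotational part I would use the geometric description of $V$ (see Figure \ref{mySetV}): the slanted boundaries $|x| = \tan\left(\frac{\pi}{n}\right)|y|$ are the radii joining the origin to two adjacent vertices of the polygon $K_n$, so the part $V_+ = V \cap \{y>0\}$ is exactly the trapezoidal sector of the ring $P_n = K_n \setminus a^{-1}K_n$ of angular width $\frac{2\pi}{n}$ whose axis is the positive $y$-axis; here one checks that the vertex $(\tan\left(\frac{\pi}{n}\right)a,a)$ has polar angle $\frac{\pi}{2}-\frac{\pi}{n}$ and that the inner and outer edges of the sector lie on $y=1$ and $y=a$. By point symmetry $V_- = -V_+$ is the opposite sector. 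The rotation $(R_n)^l$ sends the axes of $V_+$ and $V_-$ to the directions $\frac{\pi}{2}+\frac{2\pi l}{n}$ and $\frac{3\pi}{2}+\frac{2\pi l}{n}$; letting $l$ range over $0,\ldots,\frac{n}{2} -1$ these two families of $\frac{n}{2}$ directions together give the $n$ equally spaced sector-axes of $P_n$, each attained exactly once. Hence $\bigcup_{l=0}^{n/2-1}(R_n)^l V = P_n$, with the union disjoint up to the measure-zero radial edges.

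For the radial part, observe that $a^k P_n = a^k K_n \setminus a^{k-1} K_n$, and that the dilated polygons $a^k K_n$ form an increasing chain with $\bigcup_{k\in\ZZ} a^k K_n = \RR^2$ and $\bigcap_{k\in\ZZ} a^k K_n = \{0\}$; consequently the rings $\{a^k P_n : k \in \ZZ\}$ partition $\RR^2 \setminus \{0\}$ up to their polygonal boundaries. Combining this with the previous paragraph, each ring $a^k P_n$ is tiled by the $\frac{n}{2}$ rotated copies $a^k(R_n)^l V$, and the rings themselves tile the punctured plane, so the full family $\mathcal{D}^{-T}\cdot V$ partitions $\RR^2$ up to measure zero, which is the assertion.

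The transpose-inverse identity and the radial nesting are immediate; the main obstacle, such as it is, lies in the middle step: carefully confirming that $V_\pm$ are genuinely two antipodal sectors of angular width exactly $\frac{2\pi}{n}$ and that the $\frac{n}{2}$ rotations, together with the built-in point symmetry of $V$, enumerate all $n$ sectors of $P_n$ without overlap or gap. This is a routine angle computation, but it is precisely the place where the hypotheses that $n$ be even, that the slope be $\tan\left(\frac{\pi}{n}\right)$, and that the radial window be $(1,a]$ are used.
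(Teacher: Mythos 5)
Your proof is correct and takes essentially the same route as the paper: after noting that $g^{-T}=a^{-k}(R_n)^l$ (so that $\mathcal{D}^{-T}=\mathcal{D}$), you show that the $\frac{n}{2}$ rotated copies of $V$ tile the polygonal ring $P_n$ and that the dilates $a^kP_n$ tile $\RR^2\setminus\{0\}$, which is exactly the paper's two-step argument. The only difference is one of detail: you carry out the angle computation behind $\bigcup_{l=0}^{n/2-1}(R_n)^l V = P_n$ (where evenness of $n$ and the point symmetry of $V$ enter), which the paper merely asserts.
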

\begin{proof}
For $k\in \ZZ$ and $l \in \mathbb{Z}$ define $g_{k,l}=a^k(R_n)^l$, then 
\[\mathcal{D}^{-T} = \{ g_{k,-l} : k \in \mathbb{N}, l = 0,\ldots, \frac{n}{2}-1 \}.\]

Fix $k\in\ZZ$ and consider the union over all rotations which results in a scaled version of the $n$-regular polygon ring, i.e., 

$\bigcup_{l\in \{0,\ldots, \frac{n}{2}-1\}}g_{k,-l}V=a^kP_n$.
Now, the union over $k\in\ZZ$ is the union of scaled $n$-regular polygon rings, which are disjoint in measure, and this union equals $\RR^2\backslash\{0\}$, more precisely:
\[\bigcup_{k\in\ZZ}\bigcup\limits_{l=0}^{\frac{n}{2}-1}g_{k,l}V=\bigcup_{k\in\ZZ} a^k P_n=\RR^2\backslash\{0\}.\]
\end{proof}

Thus, by appealing to \ref{thm:main}, we obtain a suitable $\Lambda \subset \mathbb{R}^2$ such that $V$ is a $(\Lambda,\mathcal{D})$-Riesz-wavelet set.

\begin{figure}
\centering
\begin{tikzpicture}[
    scale=4.0,
    axis/.style={very thick, ->, >=stealth'},
    important line/.style={thick},
    dashed line/.style={dashed, thin},
    pile/.style={thick, ->, >=stealth', shorten <=2pt, shorten
    >=2pt},
    every node/.style={color=black}
    ]
    \draw[axis] (0cm,0.5)  -- (2.2,0.5) node(xline)[right]
        {$\RR$};
    \draw[axis] (1.0,-0.33cm) -- (1.0,1.43) node(yline)[above] {};
		\draw (1,0.5) \polygon{6}{0.8};
		\draw (1,0.5) \polygon{6}{0.4};
		\def\myval{2*tan(360/12)*0.8}
		\def\myvali{2*tan(360/12)*0.4}
		\coordinate (x) at ({sin(360/12)*\myval+1},{cos(360/(12))*\myval+0.5});
		\coordinate (y) at ({sin(360/12)*\myvali+1},{cos(360/(12))*\myvali+0.5});
		\coordinate (x5) at ({sin(5*360/12)*\myval+1},{cos(5*360/(12))*\myval+0.5});
		\coordinate (y5) at ({sin(5*360/12)*\myvali+1},{cos(5*360/(12))*\myvali+0.5});
		\coordinate (x7) at ({sin(7*360/12)*\myval+1},{cos(7*360/(12))*\myval+0.5});
		\coordinate (y7) at ({sin(7*360/12)*\myvali+1},{cos(7*360/(12))*\myvali+0.5});
		\coordinate (x11) at ({sin(11*360/12)*\myval+1},{cos(11*360/(12))*\myval+0.5});
		\coordinate (y11) at ({sin(11*360/12)*\myvali+1},{cos(11*360/(12))*\myvali+0.5});

	  \coordinate (ytief) at ({sin(360/12)*\myvali+1},{cos(360/(12))*\myvali+0.5-0.05});
		\coordinate (yu) at ({sin(360/12)*\myvali+1},{cos(360/(12))*\myvali+0.5-0.08});
		\coordinate (yo) at ({sin(360/12)*\myvali+1},{cos(360/(12))*\myvali+0.5-0.02});
		
		\coordinate (y11tief) at ({sin(11*360/12)*\myvali+1},{cos(11*360/(12))*\myvali+0.5-0.05});
		\coordinate (y11u) at ({sin(11*360/12)*\myvali+1},{cos(11*360/(12))*\myvali+0.5-0.08});
		\coordinate (y11o) at ({sin(11*360/12)*\myvali+1},{cos(11*360/(12))*\myvali+0.5-0.02});
		
		 \coordinate (xtief) at ({sin(360/12)*\myval+1},{cos(360/(12))*\myval+0.5+0.05});
		\coordinate (xu) at ({sin(360/12)*\myval+1},{cos(360/(12))*\myval+0.5+0.08});
		\coordinate (xo) at ({sin(360/12)*\myval+1},{cos(360/(12))*\myval+0.5+0.02});
		
		\coordinate (x11tief) at ({sin(11*360/12)*\myval+1},{cos(11*360/(12))*\myval+0.5+0.05});
		\coordinate (x11u) at ({sin(11*360/12)*\myval+1},{cos(11*360/(12))*\myval+0.5+0.08});
		\coordinate (x11o) at ({sin(11*360/12)*\myval+1},{cos(11*360/(12))*\myval+0.5+0.02});
		
		
		\foreach \laufvari in {-1,1,5,7} 
		{
		\coordinate (x2) at ({sin(\laufvari*360/12)*\myval+1},{cos(\laufvari*360/(12))*\myval+0.5});
		\coordinate (y2) at ({sin(\laufvari*360/12)*\myvali+1},{cos(\laufvari*360/(12))*\myvali+0.5});
		\draw[-] (x2) --(y2);
	}	
	
	\coordinate (obeny11) at ({sin(11*360/12)*\myvali+1},{0.8+0.5});
	\draw[-] (obeny11) -- (y11);
	\coordinate (obeny) at ({sin(1*360/12)*\myvali+1},{0.8+0.5});
	\draw[-] (obeny) -- (y);
	\coordinate (unteny7) at ({sin(7*360/12)*\myvali+1},{-0.8+0.5});
	\draw[-] (unteny7) -- (y7);
	\coordinate (unteny5) at ({sin(5*360/12)*\myvali+1},{-0.8+0.5});
	\draw[-] (unteny5) -- (y5);
	

	
	\draw[-] (ytief) --(y11tief);
	\draw[-] (y11u) --  (y11o);
	\draw[-] (yu) --  (yo);
	\draw[-] (xtief) --(x11tief);
	\draw[-] (x11u) --  (x11o);
	\draw[-] (xu) --  (xo);
	
	\draw[fill=red!45,circle, inner sep=1pt] (1.12,0.80) node {$2b$};
	\draw[] (1.12,1.4) node {$2ba$};
	
	\begin{scope}
\clip (x) -- (y) --(y11) --(x11);
\fill[gray,opacity=.4] (0,0) rectangle (2,2); 
\end{scope}

\begin{scope}
\clip (y7) -- (y5) --(x5) --(x7);
\fill[gray,opacity=.4] (0,-1) rectangle (2,2); 
\end{scope}

\kommentar{ 
	\begin{scope}
\clip (x5) -- (unteny5) -- (y5) ;
\draw[pattern=north west lines] (0,-1) rectangle (2,2);
\end{scope}

	\begin{scope}
\clip (x7) -- (unteny7) -- (y7) ;
\draw[pattern=north west lines] (0,-1) rectangle (2,2);
\end{scope}

\begin{scope}
\clip (obeny11) -- (obeny) -- (y) -- (y11);
\draw[pattern=north east lines] (0,-1) rectangle (2,2);
\end{scope}
	
\begin{scope}
\clip (unteny7) -- (unteny5) -- (y5) -- (y7);
\draw[pattern=north east lines] (0,-1) rectangle (2,2);
\end{scope}	

	\begin{scope}
\clip (x) -- (obeny) -- (y);
\draw[pattern=north west lines] (0,-1) rectangle (2,2);
\end{scope}
	\begin{scope}
\clip (x11) -- (obeny11) -- (y11) ;
\draw[pattern=north west lines] (0,-1) rectangle (2,2);
\end{scope}

}

\coordinate (obeny11hoch) at ({sin(11*360/12)*\myvali+1},{0.8+0.5+0.2});	
\coordinate (obenyhoch) at ({sin(1*360/12)*\myvali+1},{0.8+0.5+0.2});
\coordinate (x11hoch) at ({sin(11*360/12)*\myval+1},{cos(11*360/(12))*\myval+0.5+0.2});
\coordinate (xhoch) at ({sin(360/12)*\myval+1},{cos(360/(12))*\myval+0.5+0.2});
\draw [decorate,decoration={brace,amplitude=7pt},xshift=0pt,yshift=0pt]
(obeny11hoch) -- (obenyhoch)node [black,midway,yshift=16pt] {$V_{+,1}$};

\draw [decorate,decoration={brace,amplitude=7pt},xshift=0pt,yshift=0pt]
(x11hoch) -- (obeny11hoch)node [black,midway,yshift=16pt] {$V_{+,2}$};
\draw [decorate,decoration={brace,amplitude=7pt},xshift=0pt,yshift=0pt]
(obenyhoch) -- (xhoch)node [black,midway,yshift=16pt] {$V_{+,3}$};

\draw [decorate,decoration={brace,amplitude=7pt,mirror},xshift=0pt,yshift=0pt]
(unteny7) -- (unteny5)node [black,midway,yshift=-16pt] {$V{-,1}$};	
\draw [decorate,decoration={brace,amplitude=7pt,mirror},xshift=0pt,yshift=0pt]
(unteny5) -- (x5)node [black,midway,yshift=-16pt] {$V_{-,2}$};	
\draw [decorate,decoration={brace,amplitude=7pt,mirror},xshift=0pt,yshift=0pt]
(x7) -- (unteny7)node [black,midway,yshift=-16pt] {$V_{-,3}$};	
\end{tikzpicture}
\caption{The grey shaded area describes the set $V$ and its decomposition used in Lemma \ref{myKtilingRotation}.}
\label{mySetV}
\end{figure}
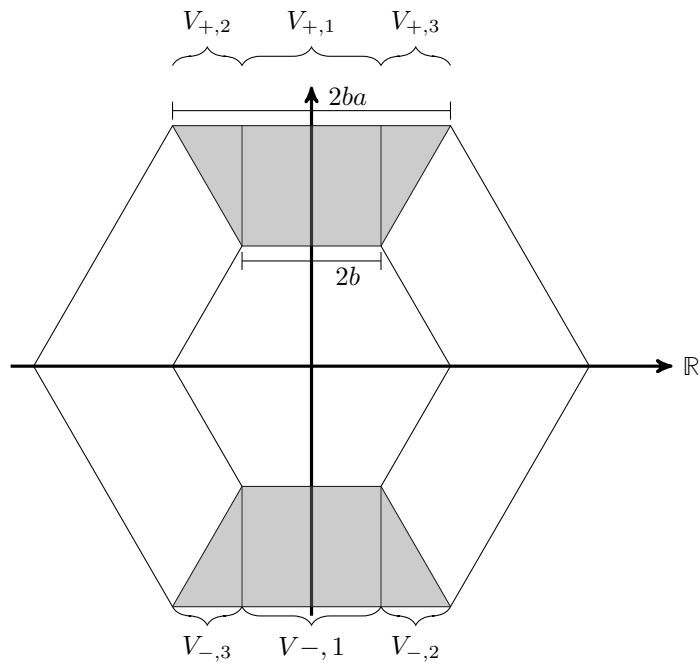

\section*{Concluding remarks}

While our exposition concentrated on the case of Riesz bases discretizing a continuous wavelet transform related to the choice of a  dilation group $H$, it seems obvious that the ideas can also be adapted to more general settings, such as composite dilation wavelets \cite{GuLa_etal}. 

A different, probably much more challenging, direction for possible generalization concerns the construction of Riesz bases with additional smoothness and vanishing moment properties. 

\bibliographystyle{amsplain}

\bibliography{referenzen}



%



\end{document}